\renewcommand\section{\@startsection {section}{1}{\z@}
{-30pt \@plus -1ex \@minus -.2ex}
{2.3ex \@plus.2ex}
{\normalfont\normalsize\bfseries}}
\renewcommand\subsection{\@startsection{subsection}{2}{\z@}
{-3.25ex\@plus -1ex \@minus -.2ex}
{1.5ex \@plus .2ex}
{\normalfont\normalsize\bfseries}}
\renewcommand{\@seccntformat}[1]{\csname the#1\endcsname. }
\newtheorem{lemma}{Lemma}
\newtheorem{conjecture}{Conjecture}
\newtheorem{proposition}{Proposition}
\begin{document}

\begin{center}
\uppercase{\bf Statistical distribution of roots of a polynomial modulo primes}
\vskip 10pt
{\bf Yoshiyuki Kitaoka}\\

{\tt kitaoka@meijo-u.ac.jp}
\vskip 10pt
\end{center}
\vskip 10pt

%
\centerline{\bf Abstract}

\noindent
Let $f(x)=x^n+a_{n-1}x^{n-1}+\dots+a_0$ be an irreducible polynomial  with integer coefficients. For a prime $p$ for which $f(x)$ is fully splitting modulo $ p$, we consider $n$ roots $r_i$ of $f(x)\equiv 0\bmod p$  with  $0 \le r_1\le\dots\le r_n<p$ and propose several conjectures on the distribution of an integer $\lceil \sum_{i\in S} r_i/p\rceil$ for a subset $S$ of $\{1,\dots,n\}$ when $p\to\infty$.

\section{Introduction}\label{sec1}
Throughout this paper, unless otherwise specified, a polynomial means a monic {\it irreducible} polynomial of degree $>1$ with integer
coefficients, and the letter $p$ denotes a prime number. For a polynomial $f(x)=x^n+a_{n-1}x^{n-1}+\dots+a_0$ of degree $n$ and a prime number $p$, we say that $f(x)$ is {\it fully splitting modulo} $p$ if there are integers $r_1,r_2,\dots,r_n$ satisfying $f(x) \equiv \prod(x-r_i)\bmod p$. We assume that 
\begin{equation}\label{eq1}
 0\le r_1,\dots, r_n<p.
\end{equation}
Substituting
\begin{equation*}
Spl(f,X):=\left\{p\le X\mid f(x) \text{ is fully splitting modulo
}p\right\}
\end{equation*}
for a positive number $X$ and $Spl(f):=Spl(f,\infty),$ we know that $Spl(f)$ is an infinite set and that the density theorem due to Chebotarev holds; that is,
$$
\lim_{X\to\infty}\frac{\#Spl(f,X)}{\# \{p\le X\} }=\frac{1}{[\mathbb{Q}(f):\mathbb{Q}]},
$$
where $\mathbb{Q}$ is the rational number field and $\mathbb{Q}(f)$ is a finite Galois extension field of $\mathbb{Q}$ generated by all roots of $f(x)$ (\cite{Se}).

\noindent
For $p\in Spl(f)$, the definition of roots $r_i$ with \eqref{eq1} clearly implies that
\begin{equation}\label{eq2}
 a_{n-1}+r_1+\dots+r_n=C_p(f)p
\end{equation}
 for an integer $C_p(f)$. The author has previously studied the statistical distribution of $C_p(f)$ and local roots $r_i$ for $p\in Spl(f)$ (\cite{K1}--\cite{K3}, \cite{K4}, \cite{K5}). A basic fact that we need here is as follows.
\begin{proposition}\label{prop1}
 If $p\in Spl(f)$ is sufficiently large, then for any subset $S$ of $\{1,2,\dots,\linebreak n\}$ with  $\#S=n-1$, we have
\begin{equation}\label{eq3}
\left\lceil\scalebox{0.8}{$\displaystyle\sum$}_{j\in S}r_j/p\right\rceil=C_p(f),
\end{equation}
where  $\lceil x \rceil$ is an integer such that $x\le\lceil x \rceil<x+1$.
\end{proposition}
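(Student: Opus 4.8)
The plan is to reduce to the complement of a single index and then feed everything into the defining relation \eqref{eq2}. Since $\#S=n-1$, there is a unique $k\in\{1,\dots,n\}$ with $S=\{1,\dots,n\}\setminus\{k\}$, so \eqref{eq2} gives
\[
\sum_{j\in S}r_j=\sum_{i=1}^n r_i-r_k=C_p(f)\,p-a_{n-1}-r_k .
\]
Dividing by $p$ and using that $C_p(f)\in\mathbb{Z}$, I would rewrite this as
\[
\frac1p\sum_{j\in S}r_j=C_p(f)-\frac{a_{n-1}+r_k}{p}.
\]
The defining property $x\le\lceil x\rceil<x+1$ then shows, by a one-line check, that \eqref{eq3} holds exactly when $0\le (a_{n-1}+r_k)/p<1$, that is, when $0\le a_{n-1}+r_k<p$. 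So the entire proposition is equivalent to establishing this two-sided inequality for every root $r_k$ once $p$ is large.

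Next I would examine when $0\le a_{n-1}+r_k<p$ can fail, using only the normalization $0\le r_k<p$. The lower inequality $a_{n-1}+r_k\ge 0$ can fail only if $a_{n-1}<0$ and $r_k\in\{0,1,\dots,-a_{n-1}-1\}$; the upper inequality $a_{n-1}+r_k<p$ can fail only if $a_{n-1}>0$ and $r_k\in\{p-a_{n-1},\dots,p-1\}$, i.e. $r_k=p-j$ with $j\in\{1,\dots,a_{n-1}\}$. In either case the offending $r_k$ is confined to a fixed residue: in the first case $r_k=m$ for one of finitely many integers $m$, and in the second case $r_k\equiv -j\pmod p$.

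The key input is that a bad root forces a divisibility. If $r_k$ is a root of $f$ modulo $p$ equal to such an $m$ (respectively congruent to $-j$), then $f(m)\equiv 0\pmod p$ (respectively $f(-j)\equiv 0\pmod p$), so $p\mid f(m)$ (respectively $p\mid f(-j)$). Because $f$ is irreducible of degree $>1$ it has no rational, hence no integer, root, so each of the finitely many values $f(m)$ and $f(-j)$ is a nonzero integer and therefore has only finitely many prime divisors. Taking $p$ larger than the largest prime dividing the product of these finitely many nonzero values rules out every bad residue simultaneously, so $0\le a_{n-1}+r_k<p$ holds for all $k$ and \eqref{eq3} follows.

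I expect the only real subtlety, rather than a genuine obstacle, to be the observation that the naive bounds $0\le r_k<p$ do not by themselves yield $0\le a_{n-1}+r_k<p$: the finitely many boundary residues near $0$ and near $p$ must be excluded, and the mechanism for excluding them is precisely the absence of integer roots of the irreducible $f$. Thus ``sufficiently large $p$'' can be made explicit as $p$ exceeding the largest prime divisor of $\prod f(m)\cdot\prod f(-j)$ over the relevant $m$ and $j$.
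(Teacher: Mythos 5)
Your proof is correct and complete: the reduction via \eqref{eq2} to the inequality $0\le a_{n-1}+r_k<p$, and the exclusion of the finitely many boundary residues because $p$ would have to divide one of finitely many nonzero integers $f(m)$ or $f(-j)$ (nonzero since the irreducible $f$ of degree $>1$ has no integer root), is exactly the standard argument. Note that the paper itself does not prove Proposition~\ref{prop1} but cites \cite{K2}; however, your mechanism is precisely the one the paper uses in Section~5 to prove the analogous in-paper statement (Lemma~2, establishing \eqref{eq24}), so your route matches the intended one.
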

A proof of Proposition~\ref{prop1} is given in \cite{K2}, where  it is initially supposed that a sequence of $n!$ points $(r_{\sigma(1)}/p,\dots,r_{\sigma(n-1)}/p)$ for all permutations $\sigma\in S_n$ is uniformly distributed in $[0,1)^{n-1}$ when $p\to\infty$ if a polynomial $f(x)$ is indecomposable. However, this turns out to be false (counterexamples in the case of $n=6$ are given in \cite{K5} and in Section~4 here). Here, a polynomial $f(x)$ is called decomposable if there are polynomials $g(x)$ and $h(x)$ satisfying $f(x)=g(h(x))$ and  $1<\deg h<\deg f$, and indecomposable otherwise. In this paper, we give detailed observations in the case of $n\le6$. To do so, we introduce an ordering among roots  $r_i$ as follows: 
\begin{equation}\label{eq4}
 0\le r_1\le\dots\le r_n<p.  
\end{equation}
This determines roots $r_i$ uniquely. We note that $r_1=0$ implies $a_0\equiv 0\bmod p$ and \eqref{eq4} is equivalent to $0<r_1<\dots<r_n<p$ for a sufficiently large $p\in Spl(f)$ by the irreducibility of $f(x)$.

In Section~2, we recall observations related to the uniform distribution, and in Section~3, we introduce a new density and give observations in the case of $\deg f\le 5$, where the density is independent of a polynomial if it is irreducible and indecomposable. In Section~4, we give observations in the case of $\deg f=6$, where the density depends on each polynomial. In Section~5, we give some theoretical results to analyze the data, although it is too far to clarify the whole picture. The data presented in this paper were obtained using pari/gp.\footnote{ The PARI~Group, PARI/GP version {\tt 2.7.0}, Bordeaux, 2014, http://pari.math.u-bordeaux.fr/.}

\section{Uniform distribution}\label{sec2}
Let us recall a fundamental fact about uniform distribution.
\begin{lemma}\label{lem1}
 For a natural number $n$, the volume of a subset of the unit cube $[0,1)^{n}$ defined by $\{ (x_1,\dots,x_n)\in[0,1)^n\mid x_1+\dots+x_n\le x\}$ is given by
\begin{equation*}
 U_n(x):=\frac{1}{n!}\sum_{i=0}^n (-1)^i{n\choose i}\max(x-i,0)^n,
\end{equation*}
and for an integer $k$  with $1\le k \le n$, we have
\begin{equation}\label{eq5}
U_n(k)-U_n(k-1)=\frac{1}{n!}\sum_{i=0}^k (-1)^i{n+1\choose i}(k-i)^n.
\end{equation}
\end{lemma}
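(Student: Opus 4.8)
The plan is to establish the two assertions in turn: first the closed form for the volume $U_n(x)$, and then the finite-difference identity \eqref{eq5}, which will follow from the first by a purely algebraic manipulation.

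For the volume, I would start from the elementary fact that the full simplex $\{(x_1,\dots,x_n)\mid x_i\ge 0,\ \sum x_i\le x\}$ has volume $\max(x,0)^n/n!$, and then carve out the portion lying outside the unit cube by inclusion--exclusion. Writing $A_i$ for the event $x_i>1$ inside this simplex, the region $\{x_1+\dots+x_n\le x\}\cap[0,1)^n$ is the simplex with all the $A_i$ removed, so its volume equals $\sum_{I\subseteq\{1,\dots,n\}}(-1)^{|I|}\operatorname{vol}\bigl(\bigcap_{i\in I}A_i\bigr)$. The key observation is that the translation $x_i\mapsto x_i-1$ for $i\in I$ identifies $\bigcap_{i\in I}A_i$ with the simplex $\{x_i\ge 0,\ \sum x_i\le x-|I|\}$, of volume $\max(x-|I|,0)^n/n!$. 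Grouping subsets by cardinality $i=|I|$ then yields exactly $U_n(x)=\frac{1}{n!}\sum_{i=0}^n(-1)^i\binom{n}{i}\max(x-i,0)^n$. The only care needed is that the boundary faces, where some coordinate equals $0$ or $1$, have measure zero, so strict and non-strict inequalities give the same volume.

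For \eqref{eq5}, I would specialize to an integer $k$ with $1\le k\le n$ and exploit that the $\max$'s become explicit. Since $\max(k-i,0)^n=(k-i)^n$ for $0\le i\le k$ and vanishes for $i>k$, while $\max(k-1-i,0)^n=(k-1-i)^n$ for $0\le i\le k-1$ and vanishes otherwise, the difference $U_n(k)-U_n(k-1)$ reduces to a finite sum over $0\le i\le k$, with the $i=k$ contribution equal to zero. Setting $a_i:=(k-i)^n$, the subtracted terms satisfy $\max(k-1-i,0)^n=a_{i+1}$, so after the reindexing $i\mapsto i+1$ I obtain
\[
U_n(k)-U_n(k-1)=\frac{1}{n!}\left(\sum_{i=0}^k(-1)^i\binom{n}{i}a_i+\sum_{i=1}^{k}(-1)^{i}\binom{n}{i-1}a_i\right).
\]

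Finally I would merge the two sums term by term. Pascal's rule $\binom{n}{i}+\binom{n}{i-1}=\binom{n+1}{i}$ collapses the coefficient of each $a_i$ for $1\le i\le k$, and the $i=0$ term already has coefficient $\binom{n}{0}=\binom{n+1}{0}$; this produces $\frac{1}{n!}\sum_{i=0}^k(-1)^i\binom{n+1}{i}(k-i)^n$, as claimed. I expect the main obstacle to be bookkeeping rather than anything conceptual: one must track the summation ranges and the vanishing of the boundary terms in the $\max$ functions carefully, so that the shift $i\mapsto i+1$ aligns the two sums correctly; once that is done, Pascal's rule finishes the argument at once.
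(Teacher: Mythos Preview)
Your proof is correct. The paper does not actually give an argument for this lemma: it cites Feller \cite{Fe} for the volume formula and simply remarks that identity \eqref{eq5} ``follows easily,'' so your proposal supplies exactly the details the paper omits. Your inclusion--exclusion derivation of $U_n(x)$ is the standard one (and is what one finds in Feller), and your reindexing-plus-Pascal computation for $U_n(k)-U_n(k-1)$ is the natural way to make ``follows easily'' precise; there is nothing to compare beyond that.
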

See \cite{Fe} for a proof of the first statement, from which identity \eqref{eq5} follows easily. We note that $A(n,k):=n!(U_n(k)-U_n(k-1)   )$ $(1\le k \le n)$ is called an Eulerian number and satisfies
$$
A(1,1)=1,\,A(n,k)=(n-k+1)A(n-1,k-1)+kA(n-1,k).
$$
Necessary values of $A(n,k)$ in this paper are
$$
\begin{array}{|c||c|c|c|c|c|c|}
\hline
n\setminus k&\,\,1\,\,&2&3&4&5\,\,\\
\hline
2&1& 1& & & \\
3&1& 4& 1& &  \\
4&1& 11& 11& 1&  \\
5&1& 26& 66& 26& 1 \\
\hline
\end{array}
$$
and we note that
\begin{equation}\label{eq6}
vol\left(\left\{(x_1,\dots,x_n)\in[0,1)^n\mid \left\lceil\scalebox{0.8}{$\displaystyle\sum$}x_i\right\rceil= k\right\}\right) =\frac{A(n,k)}{n!}.
\end{equation}

For a polynomial   $f(x)$ of degree $n$,
 \eqref{eq2} implies
$$
r_1/p+\dots+r_n/p=C_p(f)-a_{n-1}/p,
$$
whose left-hand side is close to an integer $C_p(f)$ when $p$ is large. Thus, the sequence of points $(r_1/p,\dots,r_n/p)$ is not uniformly distributed in the unit cube $[0,1)^n$ as $p\to\infty$. However, the sequence of $n!$ points $(r_{\sigma(1)}/p, . . . , r_{ \sigma(n-1)}/p) $ for all $\sigma\in S_n$ is expected to be uniformly distributed in $[0, 1)^{n-1}$ for the majority of polynomials. This is true without exception in the case of $n=2$ \cite{DFI}, \cite{T}. If the expectation is true, then the density of the distribution of the value $C_p(f)$ in \eqref{eq2} is given by Lemma\,\ref{lem1} as follows:
\begin{equation}\label{eq7}
 \lim_{X\to\infty}\frac{\#\{p\in Spl(f,X)\mid C_p(f)=k\}}
{\#Spl(f,X)}=\frac{A(n-1,k)}{(n-1)!},
\end{equation}
since Proposition\,\ref{prop1} implies 
\begin{equation}\label{eq8}
\#\{p\in Spl(f,X)\mid C_p(f)=k\}=\#\{p\in Spl(f,X)\mid \lceil\scalebox{0.8}{$\displaystyle\sum$}_{i\in S}r_{i}/p\rceil=k\}+O(1)
\end{equation}
for any subset $S$ of $\{1,\dots,n\}$ with $\#S=n-1$. Computer experiments support \eqref{eq7} well.

\noindent
Although we began our study with the distribution of $C_p(f)$, which originated from \cite{HKKN} and \cite{kinnen}, it is more interesting in view of \eqref{eq7} and \eqref{eq8} to study the distribution of the value $\lceil  (\sum_{i\in S}r_i)/p\rceil$ with the condition \eqref{eq4} on local roots $r_i$ for a given subset $S$ of $ \{1,\dots,n\}$. We provide some observations in the following sections. 
\section{New density}
We introduce here a new type of distribution. Statements on the density without proof hereinafter are conjectures based on numerical experiments.

Let $f(x)$ be a polynomial of degree $n$ and let $p$ be a prime in $Spl(f)$. We assume the global order \eqref{eq4} on local roots; that is, we number local roots $r_i$ of $f(x)$ modulo $p$  as follows:
\begin{equation*}
0\le r_1\le\dots\le r_n<p\quad(f(r_i)\equiv0\bmod p) .
\end{equation*}
As noted above, we have  $0<r_1<\dots<r_n<p$ if $p$ is sufficiently large. Let us consider a more general density than the left-hand side of \eqref{eq7}. For a subset $S$ of $\{1,2,\dots,n\}$, we define a frequency table $Pr(f,S,X)$ by 
\begin{gather}\nonumber
Pr(f,S,X):=[F_1,\dots,F_{s}],
\\
\intertext{where  $s:=\#S$ and}
\label{eq9}
F_k:=F_k(f,S,X)=\frac{\#\{p\in Spl(f,X)\mid \lceil\sum_{i\in S}r_{i}/p\rceil=k\}} 
{\#Spl(f,X)}.
\end{gather}
It is clear that the assumption $0\le r_i<p$ $(i=1,\dots,n)$ implies $F_k=0$ unless $0\le k \le s$. We see easily that $\lim_{X\to\infty}F_0(f,S,X)=0$ since primes contributing to the numerator of \eqref{eq9} divide the constant term $a_0$ of $f(x)$.

\noindent
Next, note that we may confine ourselves to the case $2\le s \le n-1$. Suppose that $F_k(f,S,X) \ne 0$ with $s=1$, say $S=\{i\}$; then, the equation $\lceil r_i/p\rceil =k$ implies $k=1$ for every sufficiently large $p$, which implies $\lim_{X\to\infty}F_1(f,S,X) =1$ and $\lim_{X\to\infty}F_k(f,S,X) =0\,(k\ne1)$. When $s=n$, that is, $S=\{1,\dots,n\}$, we have
$$
\lceil\scalebox{0.8}{$\displaystyle\sum$}_{i\in S}r_{i}/p\rceil=\lceil C_p(f)-a_{n-1}/p\rceil
=
\left\{
\begin{array}{ll}
 C_p(f)&\text{ if }a_{n-1}\ge 0,
 \\
 C_p(f)+1&\text{ if }a_{n-1}<0,
\end{array}
\right.
$$
and so this case is reduced to the case of $s=n-1$ by \eqref{eq8}, which has been previously studied \cite{K1}--\cite{K3}, \cite{K4}.

Assuming that $s=n-1$ and $f$ is indecomposable, we expect that in the case of $n\le5$, a sequence of $n!$ points $(r_{\sigma(1)}/p,\dots,r_{\sigma(n-1)}/p)$ $(\sigma\in S_n)$ is uniformly distributed as $p\to\infty$, which implies \eqref{eq7}. However, this is not the case if $n=6$, as we will see later.

We abbreviate as
$$
 Pr(f,S):=\lim_{X\to\infty}Pr(f,S,X)=\lim_{X\to\infty}[F_1(f,S,X),\dots,F_{s}(f,S,X)],
$$
assuming that the limit exists, something that the author has no data to refute. The first expectation is as follows.
\begin{conjecture}\label{expect1}
Suppose that $f(x)$ is not equal to  $g(h(x))$ for any quadratic polynomial $h(x)$. Then, for every $j$ with $1\le  j \le n$, we have
\begin{equation*}
 Pr(f,  S  )=[1,1]/2\text{ for } S=\{j,n+1-j\} ,
\end{equation*}
where $[1,1]/2$ means $[1/2,1/2]$ for simplicity; we adopt this notation hereinafter.
\end{conjecture}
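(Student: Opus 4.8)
The plan is to reduce the statement to a single reflection symmetry. Since $\#S=2$ and $0<r_j+r_{n+1-j}<2p$, the quantity $\lceil(r_j+r_{n+1-j})/p\rceil$ takes only the values $1$ and $2$ once $p$ is large (the value $0$ occurs only for the finitely many $p$ dividing $a_0$, so $\lim_X F_0=0$). Writing $\Sigma_p:=r_j+r_{n+1-j}$, we have $\lceil\Sigma_p/p\rceil=1$ exactly when $\Sigma_p<p$ and $=2$ when $\Sigma_p>p$, so the claim $Pr(f,S)=[1,1]/2$ is equivalent to the assertion that $\{p\in Spl(f):\Sigma_p<p\}$ has density $1/2$ inside $Spl(f)$. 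First I would dispose of the boundary case $\Sigma_p=p$: this forces $r_j\equiv -r_{n+1-j}\bmod p$, so $r_j$ is a common root of $f(x)$ and $f(-x)$ modulo $p$. Since $f$ is irreducible of degree $>1$ and, by hypothesis, not of the form $g(x^2)$, the polynomials $f(x)$ and $f(-x)$ are coprime in $\mathbb{Q}[x]$ (otherwise they would be associates, forcing $f$ even or $f(0)=0$), and hence share a root modulo only finitely many $p$. So the set $\{\Sigma_p=p\}$ is negligible.

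The symmetry input is the reflected polynomial $\tilde f(x):=(-1)^n f(-x)$, whose roots modulo $p$ are $p-r_i$. Because these are the negatives of the roots of $f$, we have $\mathbb{Q}(\tilde f)=\mathbb{Q}(f)$ and therefore $Spl(\tilde f)=Spl(f)$; moreover, sorting the reduced roots gives $\tilde r_i=p-r_{n+1-i}$, so that $\tilde r_j+\tilde r_{n+1-j}=2p-\Sigma_p$. Consequently $\lceil\Sigma_p/p\rceil=1\iff\lceil(\tilde r_j+\tilde r_{n+1-j})/p\rceil=2$ off the negligible set above, which yields the exact limiting identity $F_1(f,S)=F_2(\tilde f,S)$. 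Since $F_1+F_2=1$ for both $f$ and $\tilde f$, the argument would be complete once one shows that $f$ and $\tilde f$ induce the same limiting distribution for this statistic, i.e. $F_1(f,S)=F_1(\tilde f,S)$: combining the two relations gives $F_1(f,S)=F_2(\tilde f,S)=1-F_1(\tilde f,S)=1-F_1(f,S)$, whence $F_1(f,S)=1/2$.

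The main obstacle is precisely this last step, which amounts to the reflection invariance of the limiting distribution $\mu$ of the normalized sorted roots $(r_1/p,\dots,r_n/p)$ under the involution $\rho:(x_1,\dots,x_n)\mapsto(1-x_n,\dots,1-x_1)$. In the Galois picture the roots modulo $p$ are the reductions of the conjugates of a fixed root $\alpha$ of $f$ in $K=\mathbb{Q}(f)$, and $\rho$ corresponds to replacing $\alpha$ by $-\alpha$; thus the needed symmetry is that the fine equidistribution law of the fractional parts $\{(\sigma\alpha\bmod\mathfrak{P})/p\}$ is unchanged under $\alpha\mapsto-\alpha$. This lies genuinely deeper than Chebotarev's theorem, which controls only the Frobenius class and is vacuous for completely split primes; it is the analytic heart of the matter, and I would expect to need equidistribution machinery well beyond what is recalled in Section~\ref{sec2}, which is why the statement can only be offered as a conjecture.

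Finally, I would verify that the hypothesis is both natural and necessary. If instead $f(x)=g(h(x))$ with $h(x)=x^2+bx+c$ quadratic, then modulo $p$ the roots of $f$ split into pairs $(r,s)$ solving $h(x)\equiv v$ for a root $v$ of $g$, so $r+s\equiv -b\bmod p$; since this pairing matches the smallest root with the largest, the second smallest with the second largest, and so on, it is exactly $r_j\leftrightarrow r_{n+1-j}$. Hence $\Sigma_p\equiv -b\bmod p$ takes the same residue for every $p$ and, being the sum of a below-median and an above-median root, sits just below or just above $p$; in either case $\lceil\Sigma_p/p\rceil$ is eventually constant (equal to $1$ or $2$ according to the sign of $b$), which flatly contradicts $[1,1]/2$. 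This shows the exclusion in the hypothesis is exactly the degenerate case in which the pairwise sum is rigid, and makes it plausible that, in its absence, $\Sigma_p/p$ should spread symmetrically about $1$.
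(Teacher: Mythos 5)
Your proposal is not---and candidly does not claim to be---a complete proof, and in fact there is no proof in the paper to compare it with: the statement is Conjecture~1, supported there only by numerical experiments together with the partial results of Propositions~2--4. What you have done is reconstruct that scaffolding. Your reflection identity $F_1(f,S)=F_2(\tilde f,S)$ for $\tilde f(x)=(-1)^nf(-x)$ is exactly Proposition~\ref{prop3} specialized to the self-dual sets $S=\{j,n+1-j\}$ (for which $S\spcheck=S$); your disposal of the boundary case $r_j+r_{n+1-j}=p$ via coprimality of $f(x)$ and $f(-x)$ is a clean, direct verification of condition (A) in the only case needed here ($s=2$), playing the role of Proposition~\ref{prop4}; and your final paragraph on the necessity of the hypothesis is Proposition~2 of the paper. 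All of these pieces are correct.

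The genuine gap is the step you defer: $F_1(f,S)=F_1(\tilde f,S)$. Given your proved identity $F_1(f,S)=F_2(\tilde f,S)=1-F_1(\tilde f,S)$, this missing statement is not merely sufficient for the conjecture but equivalent to it, so the reduction makes no logical progress; it is a reformulation, and that is precisely why the paper can offer the statement only as a conjecture. More seriously, the form in which you propose to attack it---full invariance of the limiting joint distribution of the sorted normalized roots under $\rho:(x_1,\dots,x_n)\mapsto(1-x_n,\dots,1-x_1)$---is refuted by the paper's own data: full $\rho$-invariance would force the symmetry \eqref{eq10} for \emph{every} $S$, yet Example~2 exhibits an indecomposable sextic $f_2$ (which satisfies the hypothesis of the conjecture) for which \eqref{eq10} fails for all the sets tagged $(*)$ with $\#S=3$; relatedly, \eqref{eq17}--\eqref{eq18} show that $Pr(f_2(-x),S)$ coincides with the densities of a \emph{different} polynomial $f_3$, not with those of $f_2$. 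So ``this last step'' does not amount to $\rho$-invariance of $\mu$: that stronger statement is false in general for $n=6$, and any viable version of your program must target only the distribution of the single statistic $(r_j+r_{n+1-j})/p$---which, as noted, is exactly the conjecture itself.
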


\noindent
We checked the following polynomials.
Let $BP$ be a polynomial of degree $n=4$, 5, or 6 with coefficients
       equal to $0$ or $1$, and let $\alpha$ be one of its roots.
For a number $\beta = \sum_{i=1}^n c_i\alpha^{i-1}$ with $0\le c_i
       \le 2$, we take a polynomial $f$ of degree $n$ for which $\beta$ is a
       root.
We skip a reducible polynomial and also a decomposable polynomial, which is in the
form $f(x)=g(h(x))$ with $\deg h =2$. Considering that 
\begin{equation*}
F_k:=F_k(f,S,X) \to1/2
\quad(k=1,2,\,X\to\infty)
\end{equation*}
holds under Conjecture 1, we judge that the expectation is true if 
$$
|F_1-F_2|<0.1
$$
for a large number $X$, since $F_1+F_2 = 1$. 
\noindent
The excluded case is as follows.
\begin{proposition}
Suppose that a polynomial $f(x)=x^n+a_{n-1}x^{n-1}+\dots$ is equal to $g(h(x))$ for a quadratic polynomial $h(x)$. Then, for $S=\{j,n+1-j\}$ $(1\le  j\le n)$, we have
$$
Pr(f,S)=
\begin{cases}
 [1,0] & \text{ if } a_{n-1}\ge0,
\\
[0,1]& \text{ if } a_{n-1}<0.
\end{cases}
$$
\end{proposition}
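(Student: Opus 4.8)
The plan is to exploit the fibre structure of the map $x\mapsto h(x)$. First I would normalize: since $f$ is monic with integer coefficients, one may take $h(x)=x^2+bx+c$ and $g$ monic in $\mathbb{Z}[x]$, with $m:=\deg g=n/2$. Expanding $f=h^m+g_{m-1}h^{m-1}+\cdots$ and comparing the coefficient of $x^{n-1}$ gives $a_{n-1}=mb$, so $a_{n-1}\ge0$ exactly when $b\ge0$. I would then observe that for every $p\in Spl(f)$ with $p\nmid\operatorname{disc}(f)$—all but finitely many—the $n$ roots are distinct modulo $p$ and partition into the $m$ fibres of $h$: the two preimages of each root of $g$ sum to $-b$, so the roots form $m$ pairs, each with sum $\equiv-b\pmod p$.

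Next I would analyze this pairing under the global order \eqref{eq4}. Writing $t:=(-b)\bmod p\in[0,p)$, the partner involution $\rho(x)=(t-x)\bmod p$ preserves each of the arcs $A:=[0,t]$ and $B:=(t,p)$ and reverses order on each, so a pair lying in $A$ sums to exactly $t$, a pair lying in $B$ sums to exactly $t+p$, and no pair crosses between the arcs. The heart of the argument is the claim that, apart from finitely many $p$, one of the arcs contains no root at all. Granting this, all $n$ roots lie in a single arc, $\rho$ acts on $r_1<\cdots<r_n$ as the reversal $i\mapsto n+1-i$, and therefore $r_j$ and $r_{n+1-j}$ are genuine partners; their sum is then $t$ or $t+p$, giving $\lceil(r_j+r_{n+1-j})/p\rceil=1$ or $2$.

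The decisive and, I expect, only substantive step is the emptiness of the short arc, which is forced by the boundedness of $b$. If $b>0$ then $t=p-b$ for large $p$, so $B=(p-b,p)$ has fixed length $b$; a root $r\in B$ would be $p-i$ with $1\le i\le b-1$, whence $f(-i)\equiv0\pmod p$. As $f$ is irreducible of degree $>1$ it has no rational root, so each $f(-i)$ is a fixed nonzero integer and only finitely many $p$ divide their product; thus $B=\emptyset$ for large $p$, all roots lie in $A$, and $r_j+r_{n+1-j}=t=p-b\in(0,p)$ has ceiling $1$, giving $Pr(f,S)=[1,0]$. The case $b<0$ is the mirror image: $t=-b$, the short arc is $A=[0,-b]$, a root there forces $p\mid f(i)$ for some $0\le i\le -b$ (with $f(0)=a_0\ne0$), so for large $p$ all roots lie in $B$ and $r_j+r_{n+1-j}=t+p=p-b\in(p,2p)$ has ceiling $2$, giving $Pr(f,S)=[0,1]$. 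When $b=0$ one has $t=0$ and a root equal to $0$ only when $p\mid a_0$, so $A$ is again empty for large $p$, the common sum is $p$, and the ceiling is $1$, consistent with $a_{n-1}=0\ge0$.

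The only points needing care beyond this are the initial normalization of $h$ to a monic integral quadratic and the verification that emptiness of the short arc really identifies $\{r_j,r_{n+1-j}\}$ with a single $h$-fibre; once these are secured, the equality $\lceil(r_j+r_{n+1-j})/p\rceil=k$ holds for all but finitely many $p\in Spl(f)$, so the limiting frequencies in \eqref{eq9} are exactly $[1,0]$ or $[0,1]$ as claimed, with no appeal to equidistribution.
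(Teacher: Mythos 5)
Your proof is correct and takes essentially the same route as the paper: both arguments rest on the involution $r \mapsto (-b-r) \bmod p$ induced by the quadratic $h$, the key fact that for all large $p$ no root of $f$ can lie in the short interval of bounded length $|b|$ near $0$ or $p$ (you prove this directly from the absence of rational roots of the irreducible $f$, where the paper cites \cite{K5}), and the resulting exact identity $r_j+r_{n+1-j}=p-2a_{n-1}/n$, whose ceiling is $1$ or $2$ according to the sign of $a_{n-1}$. The only cosmetic differences are that you keep $h=x^2+bx+c$ in general form where the paper normalizes $h$ to $(x+a)^2$ or $(x+a)(x+a+1)$ by parity, and that your boundary-exclusion step is self-contained rather than quoted.
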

\begin{proof}
We only have to see that, except for finitely many primes $p$, the value $\lceil(r_j+r_{n+1-j})/p\rceil$ is equal to $1$ or $2$ according to whether $a_{n-1}\ge0$ or $a_{n-1}<0$, respectively. We note that $\deg g=n/2$ and we may assume that $h(x)=(x+a)^2$ or $h(x)=(x+a)(x+a+1)$ for an integer $a$ according to whether the coefficient of $x$ of $h(x)$ is even or odd, respectively. In the case of $h(x)=(x+a)^2$, $a_{n-1}=an$ is easy, and if $r\in\mathbb Z$ $(0<r<p)$ is a root of $f(x)=g((x+a)^2)\equiv0\bmod p$, then $p-r-2a$ is also one of its roots, and we see $0<p-r-2a<p$ for a sufficiently large $p$ (\cite{K5}).Hence, by the assumption \eqref{eq4}, the sequences $r_1<\dots<r_n$ and $p-r_n-2a<\dots<p-r_1-2a$ are identical. Thus, we have $r_j+r_{n+1-j}=p-2a=p-2a_{n-1}/n$, which implies
$$
\begin{cases}
 (r_j+r_{n+1-j})/p\le 1 & \text{ if }a_{n-1}\ge0,
\\
(r_j+r_{n+1-j})/p>1 & \text{ if }a_{n-1}<0.
\end{cases}
$$
This completes the proof in the case of $h(x)=(x+a)^2$. In the case of $h(x)=(x+a)(x+a+1)$, noting that $a_{n-1}=(1+2a)n/2$ and both  $r_i$ and $p-r_i-1-2a\,(i=1,\dots,n)$ are roots, we have $r_j+r_{n+1-j}=p-1-2a=p-2a_{n-1}/n$ in a similar way as above, which completes the proof.
\end{proof}

For a subset $S$ of $\{1,\dots,n\}$, we put $S\spcheck:=\{n+1-i\mid i\in S\}$. Then, for $Pr(f,S)=[F_1,\dots,F_{s}]$, we have 
\begin{equation*}
 Pr(f,S\spcheck)=[F_{s},\dots,F_1]
\end{equation*}
empirically in many cases, which is equivalent to 
\begin{equation}\label{eq10}
 Pr(f,S\spcheck)=Pr(f,S)\spcheck,
\end{equation}
putting $[a_1,\dots,a_s]\spcheck:=[a_s,\dots,a_1]$.
\begin{proposition}\label{prop3}
Under the assumption that 
\begin{center}
 \textnormal{(A)} $\sum_{j\in S\spcheck}r_j/p$ is not an integer for every sufficiently large prime $p\in Spl(f)$,
\end{center}
we have $$Pr((-1)^nf(-x),S)\spcheck=Pr(f(x),S\spcheck).$$ Moreover, if $Pr((-1)^n f(-x),S)=Pr(f(x),S)$ holds, then we have \eqref{eq10}.
\end{proposition}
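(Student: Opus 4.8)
The plan is to exploit the involution $x\mapsto -x$, which carries $f$ to the monic degree-$n$ polynomial $\tilde f(x):=(-1)^nf(-x)$ whose roots modulo $p$ are the negatives of those of $f$. First I would record the elementary facts about $\tilde f$: since $r\mapsto -r\equiv p-r$ is a bijection of the root sets, $\tilde f$ is again irreducible with $\mathbb{Q}(\tilde f)=\mathbb{Q}(f)$ and $Spl(\tilde f)=Spl(f)$; and for every sufficiently large $p\in Spl(f)$, where $0<r_1<\dots<r_n<p$, the map $r_i\mapsto p-r_i$ reverses the ordering, so the roots of $\tilde f$ satisfying \eqref{eq4} are exactly $\tilde r_i=p-r_{n+1-i}$ for $i=1,\dots,n$. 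This index reversal, forced by the global ordering \eqref{eq4}, is the heart of the argument.

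Next I would translate this into the quantity controlling the frequency tables. Writing $s=\#S$ and substituting $j=n+1-i$ (so that $j$ runs over $S\spcheck$ as $i$ runs over $S$), I get
\begin{equation*}
\sum_{i\in S}\tilde r_i=\sum_{i\in S}(p-r_{n+1-i})=sp-\sum_{j\in S\spcheck}r_j,
\end{equation*}
hence $\sum_{i\in S}\tilde r_i/p=s-y$ with $y:=\sum_{j\in S\spcheck}r_j/p$. This is precisely where assumption (A) enters: because $y\notin\mathbb{Z}$ for all large $p$, one has $\lceil s-y\rceil=s+\lceil -y\rceil=s-\lfloor y\rfloor=s+1-\lceil y\rceil$, giving
\begin{equation*}
\left\lceil\sum_{i\in S}\tilde r_i/p\right\rceil=s+1-\left\lceil\sum_{j\in S\spcheck}r_j/p\right\rceil.
\end{equation*}
Thus, apart from finitely many primes, a prime contributes the value $k$ to the table $Pr(\tilde f,S,X)$ exactly when it contributes the value $s+1-k$ to the table $Pr(f,S\spcheck,X)$.

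Finally I would pass to the limiting densities. Because the underlying prime set is unchanged, $\#Spl(\tilde f,X)=\#Spl(f,X)$, so the bijection above shows that the $k$-th entry of $Pr(\tilde f,S)$ equals the $(s+1-k)$-th entry of $Pr(f,S\spcheck)$; reversing the list, the $k$-th entry of $Pr(\tilde f,S)\spcheck$ equals the $k$-th entry of $Pr(f,S\spcheck)$, which is exactly the asserted identity $Pr((-1)^nf(-x),S)\spcheck=Pr(f(x),S\spcheck)$. For the ``moreover'' clause I would apply $\spcheck$ to the hypothesis $Pr((-1)^nf(-x),S)=Pr(f(x),S)$ and combine it with the identity just proved, obtaining $Pr(f,S\spcheck)=Pr(f,S)\spcheck$, which is \eqref{eq10}. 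The step I expect to require the most care is the bookkeeping around (A): without non-integrality of $y$ the ceiling relation loses its $+1$ and the reversal fails, so one must both justify this hypothesis in the regime used and confirm that the exceptional primes—those dividing $a_0$ or the discriminant, where roots coincide or vanish—form a finite set and hence do not affect the limits.
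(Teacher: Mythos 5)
Your proof is correct and takes essentially the same route as the paper: both identify the ordered roots of $(-1)^nf(-x)$ modulo $p$ as $p-r_{n+1-i}$, apply the ceiling identity $\lceil s-y\rceil=s+1-\lceil y\rceil$ for non-integral $y$ (which is exactly where assumption (A) enters), and deduce $F_k((-1)^nf(-x),S,X)=F_{s+1-k}(f(x),S\spcheck,X)$ before passing to the limit. Your explicit treatment of the ``moreover'' clause and of the finitely many exceptional primes only fills in steps the paper leaves implicit.
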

\begin{proof}
Since we have $f(x)\equiv\prod(x-r_i)\bmod p$ with $0<r_1<\dots< r_n<p$ for  a 
sufficiently large prime $p$,
we get $(-1)^nf(-x)\equiv\prod(x+r_i)\equiv\prod(x-R_i)\bmod p$ with
$$0<R_1:=p-r_n<\dots<R_i:=p-r_{n+1-i}<\dots< R_n:= p-r_1<p.$$
Noting an equality $\lceil s-r\rceil=s+1-\lceil  r\rceil$
for $s:=\#S\in\mathbb Z$, $r\not\in\mathbb Z$,
we see that 
$\lceil\sum_{i\in S}R_i/p\rceil=\lceil s-\sum_{j\in S\spcheck}r_j/p\rceil=s+1-\lceil\sum_{j\in
S\spcheck}r_j/p\rceil$,
which implies
$F_k((-1)^nf(-x),S,X)=F_{s+1-k}(f(x),S\spcheck,X)$.
Hence, we have the desired equation
 $Pr((-1)^n f(-x),S,X)=Pr(f(x),\linebreak[3]S\spcheck,X)\spcheck$.
\end{proof}
\noindent
{\bf Remark 1.}
If $f$ is indecomposable with $n\le5$, 
then $Pr(f,S)$ seems to be dependent on only $S$ and $\deg f$, as we see below.
Hence, this proposition elucidates \eqref{eq10}.
Therefore, The assumption \textnormal{(A)} is not necessarily true.
For example, for $f=x^4+1$, both $r_1<\dots<r_4$ and $p-r_4<\dots<p-r_1$
 are the set of local roots.
Hence, we have $r_1=p-r_4$ and $r_2=p-r_3$, that is, $\sum_{i\in S}r_i/p=1$ for $S=\{1,4\},\{2,3\}$.
Another example is the polynomial $f_3$ (cf. Remark 4).

Before giving a sufficient condition to (A),
let us recall a relation between the decomposition of a polynomial $f(x)$
 modulo $p$ and that of $p$ to the product of prime ideals over
 $F:=\mathbb{Q}(\alpha)$, where $\alpha$ is a root of $f(x)$.
Denote the ring of integers of $F$ by $O_F$ and prime ideals lying above $p$ by
 ${\mathfrak p}_i$.
Suppose that $p\in Spl(f)$ is sufficiently large and $r_1,\dots,r_n$ are roots of $f(x)$ modulo $p$\,; 
then, we have  the decomposition of $p$\,:  
$pO_F ={\mathfrak p}_1\cdots{\mathfrak p}_{n} $
and we may suppose that, by renumbering
\begin{equation}\label{eq11}
 {\mathfrak p}_i=(\alpha-{r}_i)O_F + pO_F
\text{ and } O_F/pO_F \cong O_F/{\mathfrak p}_1\oplus\dots\oplus O_F/{\mathfrak p}_{n},
\end{equation}
in particular $\alpha\equiv r_i\bmod {\mathfrak p}_i$.
The isomorphism in \eqref{eq11} is given by
\begin{equation*}
\beta \bmod pO_F \mapsto  (\beta \bmod {\mathfrak p}_1,\dots,\beta \bmod {\mathfrak p}_{n})
\end{equation*}
and
\begin{equation*}
 O_F/{\mathfrak p}_i \cong {\mathbb{Z}}/p{\mathbb{Z}}.
\end{equation*}
Moreover, $p$ splits fully over $F$ if and only if $p$ splits fully over the
field $\mathbb{Q}(f)$ generated by all roots of $f(x)$.
\begin{proposition}\label{prop4}
If  the condition \textnormal{(A)} for $S$ with $\#S=s$ does not hold,
then   a sum of  some $s$  roots of $f(x)$  is  zero,
that is, $f(x)=(x^{s}+0\cdot x^{s-1}+\dots)(x^{n-s}+a_{n-1}x^{n-s-1}+\dots)$.
\end{proposition}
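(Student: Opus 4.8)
The plan is to pass from the failure of condition (A) to a vanishing subset sum among the complex roots of $f$, via reduction in the splitting field. First I would record what the failure of (A) for $S$ means: there exist arbitrarily large, hence infinitely many, primes $p\in Spl(f)$ for which $\sum_{i\in S}r_i/p$ is an integer; since $0<r_i<p$ forces $0<\sum_{i\in S}r_i<sp$, this is exactly the divisibility $\sum_{i\in S}r_i\equiv0\bmod p$.

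Next I would move to the splitting field $L:=\mathbb Q(f)$, with ring of integers $O_L$. For such a $p$ (large enough to be unramified and to split completely in $L$, which holds for $p\in Spl(f)$ as recalled after \eqref{eq11}), fix a prime $\mathfrak P$ of $O_L$ above $p$, so that $O_L/\mathfrak P\cong\mathbb Z/p\mathbb Z$. Writing the monic integral factorization $f(x)=\prod_{k=1}^n(x-\beta_k)$ with $\beta_k\in O_L$ and reducing modulo $\mathfrak P$, I get $\prod_k(x-\bar\beta_k)\equiv\prod_i(x-r_i)\bmod\mathfrak P$ in $\mathbb F_p[x]$; for sufficiently large $p$ the roots are distinct, so the residues $\bar\beta_k$ are, up to order, the $r_i$. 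In particular the index set $S$ singles out a subset $T=T(p)$ of $\{\beta_1,\dots,\beta_n\}$ with $\#T=s$ and $\sum_{\beta\in T}\beta\equiv\sum_{i\in S}r_i\equiv0\bmod\mathfrak P$.

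The heart of the argument is then a finiteness-plus-norm step. There are only finitely many $s$-element subsets of $\{\beta_1,\dots,\beta_n\}$, so by the pigeonhole principle one subset $T$ arises for infinitely many of the primes above. For this fixed $T$ put $\gamma:=\sum_{\beta\in T}\beta\in O_L$. Then $\gamma\in\mathfrak P_p$ for infinitely many $p$ of pairwise distinct residue characteristic, whence $p\mid N_{L/\mathbb Q}(\gamma)$ for each such $p$ (the product $\prod_{\sigma}\sigma(\gamma)$ lies in $\mathfrak P_p$ and is a rational integer, hence lies in $\mathbb Z\cap\mathfrak P_p=p\mathbb Z$). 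As $N_{L/\mathbb Q}(\gamma)$ is a single fixed rational integer divisible by infinitely many distinct primes, it must vanish, and therefore $\gamma=0$. Thus some $s$ of the roots of $f$ sum to zero, and grouping them gives $f(x)=\bigl(\prod_{\beta\in T}(x-\beta)\bigr)\bigl(\prod_{\beta\notin T}(x-\beta)\bigr)$; here the first factor is $x^s+0\cdot x^{s-1}+\cdots$ because $\sum_{\beta\in T}\beta=0$, while the coefficient of $x^{n-s-1}$ in the second factor equals $-\sum_{\beta\notin T}\beta=a_{n-1}$, since the total sum of roots is $-a_{n-1}$. This is exactly the asserted shape.

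The step I expect to be the main obstacle is the transition from the $p$-dependent subset $T(p)$ to a single algebraic identity: a priori the subset of roots reducing to $\{r_i\mid i\in S\}$ varies with $p$, because the size-ordering \eqref{eq4} reshuffles which conjugate reduces to which residue, so one cannot directly take a limit. The pigeonhole reduction to a fixed $T$, together with the fact that an algebraic integer lying in prime ideals over infinitely many distinct rational primes has vanishing norm, is what converts the congruences into the exact relation $\gamma=0$; care is also needed to discard the finitely many ramified or small primes so that reduction mod $\mathfrak P$ genuinely realizes the $r_i$ as the residues of the $\beta_k$.
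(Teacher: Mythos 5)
Your proof is correct and follows essentially the same route as the paper's: reduce the roots of $f$ modulo a prime of the splitting field lying above $p$, use the pigeonhole principle to replace the $p$-dependent subset of roots congruent to $\{r_i\}_{i\in S}$ by a single fixed subset valid for infinitely many primes, and conclude that an algebraic integer contained in primes above infinitely many distinct rational primes must vanish. The only cosmetic differences are that the paper realizes the fixed subset as $\{\alpha^{\sigma_i}\}$ by applying Galois automorphisms that move the primes $\mathfrak{P}_{i,1}$ of $K=\mathbb{Q}(f)$ to a common prime $\mathfrak{P}_{1,1}$ (and phrases condition (A) via $S\spcheck$ rather than $S$, which is immaterial here), and it leaves implicit the norm argument that you spell out.
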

\begin{proof}
The assumption means that there are infinitely many primes $p$ such that 
$\sum_{j\in S\spcheck} r_j \equiv 0\bmod p$.
Let $\alpha$ be a  root of $f(x)$ and put $F=\mathbb{Q}(\alpha)$, and let $K:=\mathbb{Q}(f)$
be a field generated by all roots of $f(x)$.
For a sufficiently large prime  $p\in Spl(f)$
and   roots $r_1,\dots,r_n$ of $f(x)$ modulo $p$ with \eqref{eq4},
let ${\mathfrak p}_i$ be a prime ideal of $F$ defined by \eqref{eq11}
and let ${\mathfrak p}_i={\mathfrak P}_{i,1}\dots{\mathfrak P}_{i,g}$ $(g=[K:F])$
be the decomposition of ${\mathfrak p}_i$ to the product of prime ideals over $K$.
The congruence $\alpha\equiv r_i\bmod {\mathfrak p}_i$ implies $\alpha\equiv r_i\bmod
{\mathfrak P}_{i,1}$.
Taking an automorphism $\sigma_i$ of $K$ over $\mathbb{Q}$ such that
${\mathfrak P}_{i,1}^{\sigma_i}={\mathfrak P}_{1,1}$,
we have   $\alpha^{\sigma_i}\equiv r_i\bmod {\mathfrak P}_{1,1}$.
Hence, $\sum_{i\in S\spcheck} \alpha^{\sigma_i} \equiv\sum_{i\in S\spcheck} r_i
\equiv 0\bmod {\mathfrak P}_{1,1}$ for infinitely many  prime numbers $p\in Spl(f)$.
Although automorphisms $\sigma_i$ depend on $p$,
we can choose an  appropriate infinite subset of $Spl(f)$ so that 
automorphisms $\sigma_i$ are independent of $p$. 
Hence, we have $\sum_{i\in S\spcheck} \alpha^{\sigma_i} \equiv\sum_{i\in S\spcheck} r_i
\equiv 0\bmod {\mathfrak P}_{1,1}$ for infinitely many primes $p$,
which implies  $\sum_{i\in S\spcheck} \alpha^{\sigma_i} =0$.
Since $\alpha^{\sigma_i}$ are distinct roots of $f(x)$ by $\alpha^{\sigma_i}\equiv r_i\bmod {\mathfrak P}_{1,1}$, we complete the proof. 
\end{proof}

Let us give some observations in the cases of $n=3,4,5$.
The case of $n=6$ is discussed in the following section.

In the case of  $n=3$, Conjecture 1 and \eqref{eq8} give
$$
Pr(f,S)=[1,1]/2 \text{ if } \#S=2.
$$

 In the case of $n=4$, supposing that $f$ is irreducible and indecomposable, We conjecture
\begin{align*}
&Pr(f,\{1,2\})=Pr(f,\{3,4\})\spcheck=[5,1]/6,
\\
&Pr(f,\{1,3\})=Pr(f,\{2,4\})\spcheck=[5,1]/6,
\\
&Pr(f,\{1,4\})=Pr(f,\{2,3\})=[1,1]/2,
\\
&Pr(f,S)=[1,4,1]/6\text{ if }s:=\#S=3.
\end{align*}
We note
\begin{equation*}
{n\choose s}^{-1}\sum_{\#S=s}Pr(f,S)=
\begin{cases}
 [1,1]/2!&\text{ if } s=2,
\\
[1,4,1]/3!&\text{ if } s=3,
\end{cases}
\end{equation*}
where ${n\choose s}$ is the number of subsets $S$ with $\#S=s$.
This  suggests that a sequence of points $[r_i,r_j]/p$ $(i\ne j)$
(resp. $[r_i,r_j,r_k]/p$ $(i\ne j, j\ne k,i\ne k)$) is uniformly
distributed in $[0,1)^2$ (resp. $[0,1)^3$) (cf.  \eqref{eq6}).
Thus, the symmetry \eqref{eq10} holds.
We checked the following polynomials.
Let $BP$ be an irreducible  polynomial of degree $4$ with  coefficients
       equal to $0$ or $1$, and let $\alpha$ be one of its roots.
For a number $\beta = \sum_{i=1}^4c_i\alpha^{i-1}$ with $0\le c_i \le 2$, 
we take a polynomial $f$ for which $\beta$ is a root,
but skip a reducible polynomial and a decomposable one.
We observe the behavior of values $6[F_1,F_2]-[5,1]$ for $S=\{1,2\}$ for increasing $X$, for example.
If the above conjecture is true, then it converges to $[0,0]$ when $X\to\infty$.
Defining an integer $X_j$ by $\#Spl(f,X_j)=1000j$,
we observe values   $|6F_1-5|+|6F_2-1|$ at $X=X_j$.
If they are less than $0.01$ for successive integers $X=X_j,\dots, X_{j+100}$ for
       some $j$,
we conclude that the above is true.

In the case of $n=5$\,
we adopt the following $d$-adic approximation method to find a candidate of the limit. First, we take the polynomial  $f=x^5-10x^3+5x^2+10x+1$, 
which defines a unique subfield of  degree $5$ in a cyclotomic field $\mathbb{Q}(\exp(2\pi i/25))$, and define an integer $X_j$ by $\#Spl(f,X_j)=1000j$ as before. Suppose that a sequence of vectors $c_m$ converges to a rational vector $\bm{a}=[a_1,\dots,a_s]/b$ $(a_i,b\in{\mathbb{Z}})$ and let $D$ be a finite set of integers including $b$. Then, for a large integer $m$, the error $\sum_i|dc_m[i] -r(dc_m[i])|$ is minimal at $d=b$, where $r(x)$ denotes the nearest integer to $x.$
Noting this, to guess the limit from a sequence $\{c_m\}$ given by computer experiments, we begin by guessing a set $D$ including the denominator $b$ of $\bm{a}$ by some means or other. In this case, we take for $D$ $\{d\mid 0<d\le500 \text{ and a prime divisor of $d$ is   $2,3$ or $5$}\}$. Second, we look for an integer $d=d_0\in D$ that gives the minimum of errors $\sum_i|dc_m[i] -r(dc_m[i])|$ $(d\in D)$. Then, $d_0$ is a candidate of the denominator. We checked that there is an integer $j$ such that for successive integers $X=X_j,\dots, X_{j+10^5}$, both the integer $d=d_0$ determined above and rounded integers of elements of $d\cdot Pr(f,S,X_i)$ are stable. In this case, the minimum error is less than $0.01 $ for $X=10^{10}$ and the conjecture holds.
The symmetry \eqref{eq10} holds.
\begin{align*}
& Pr(f,\{1, 2\})= Pr(f,\{4, 5\})\spcheck =[137, 7]/144,
\\
&Pr(f,\{1, 3\})=Pr(f,\{3, 5\})\spcheck=[11, 1]/12,
\\
&Pr(f,\{1, 4\})=Pr(f,\{2, 5\})\spcheck=[17, 7]/ 24,
\\
  &Pr(f,\{1, 5\})=[1, 1]/2,
\\
&Pr(f,\{2, 3\})=Pr(f,\{3, 4\})\spcheck=[29, 19]/48,
\\
&Pr(f,\{2,4\})=[1, 1]/2,
\\
&Pr(f,\{1,2,3\})=Pr(f,\{3,4,5\})\spcheck=[71, 67, 6]/144,
\\
&Pr(f,\{1,2,4\})=Pr(f,\{2,4,5\})\spcheck=[11, 12, 1]/24,
\\
&Pr(f,\{1,2,5\})=Pr(f,\{1,4,5\})\spcheck=[7, 39, 2]/48,
\\
&Pr(f,\{1,3,5\})=[1, 22, 1]/24,
\\
&Pr(f,\{2,3,4\})=[1, 7, 1]/9,
\\
&Pr(f,\{2,3,5\})=Pr(f,\{1,3,4\})\spcheck=[1, 17, 6]/24,
\\
&Pr(f,S)=[1, 11, 11, 1]/24\text{ if }\#S=4.
\end{align*}
We note that
\begin{equation*}
{n\choose s}^{-1} \sum_{\#S=s}Pr(f,S)=
\begin{cases}
 [1,1]/2! & \text{ if } s = 2,
\\
[1,4,1]/3!& \text{ if } s = 3,
\\
[1,11,11,1]/4!& \text{ if } s = 4.
\end{cases}
\end{equation*}

\noindent
To check other polynomials, we consider that an element $a$ of $Pr(f,S,X)$ converges to a candidate  $A/B$
$(A,B\in\mathbb Z)$
     if we have $A=r(a\cdot B)$.
By this method, we checked the above for any irreducible polynomial $f$ of degree $5$ that has a root $\sum_i c_i \alpha^{i-1}$ $(0\le c_i\le2)$, where $\alpha$ is a root of an irreducible polynomial with coefficients equal to $0$ or $1$.

\section{The case of degree 6}
In the case of $ n\le 5$, the classification of being decomposable or not
is enough to consider densities.
However, in the case of $n=6$, it is not enough and indecomposable
polynomials have been divided
into at least four types  so far
\footnote{
Added in the proof: This classification by densities turned out to be inappropriate.
}.
First, we give some examples.

\noindent
{\bf Example 1.} 
For the indecomposable polynomial 
$f=f_1=x^6+x^5+x^4+x^3+x^2+x+1$,
we expect
\begin{align*}
&Pr(f,\{1, 2\}) = Pr(f,\{5, 6\})\spcheck=[39, 1]/40,
\\
&Pr(f,\{1, 3\}) = Pr(f,\{4, 6\})\spcheck =[14, 1]/15,
\\
&Pr(f,\{1, 4\}) = Pr(f,\{3, 6\})\spcheck=[17, 3]/20,
\\
&Pr(f,\{1, 5\}) =Pr(f,\{2, 6\})\spcheck= [23, 7]/30,
\\
&Pr(f,\{1, 6\}) = [1, 1]/2, \\
&Pr(f,\{2, 3\}) = Pr(f,\{4, 5\})\spcheck =[19, 5]/24,
\\
&Pr(f,\{2, 4\}) = Pr(f,\{3, 5\})\spcheck=[3, 1]/4,
\\
&Pr(f,\{2, 5\}) = [1, 1]/2,\\
&Pr(f,\{3, 4\}) = [1, 1]/2, 
\end{align*}
\vspace{-8mm}
\begin{align*}
&Pr(f,\{1, 2, 3\}) =Pr(f,\{4, 5, 6\})\spcheck= [251, 106, 3]/360,
  \\
&Pr(f,\{1, 2, 4\}) =Pr(f,\{3, 5, 6\})\spcheck= [67, 52, 1]/120,\\
&Pr(f,\{1, 2, 5\}) =Pr(f,\{2, 5, 6\})\spcheck= [37, 82, 1]/120,    \\
&Pr(f,\{1, 2, 6\}) =Pr(f,\{1, 5, 6\})\spcheck = [16, 73, 1]/90, \\
&Pr(f,\{1, 3, 4\}) =  Pr(f,\{3, 4, 6\})\spcheck=[37, 82, 1]/120,   \\
&Pr(f,\{1, 3, 5\}) =  Pr(f,\{2, 4, 6\})\spcheck=[27, 92, 1]/120,    \\
&Pr(f,\{1, 3, 6\}) =  Pr(f,\{1, 4, 6\})\spcheck = [13, 104, 3]/120, \\
&Pr(f,\{1, 4, 5\}) =  Pr(f,\{2, 3, 6\})\spcheck =[11, 46, 3]/60,\\
&Pr(f,\{2, 3, 4\}) =   Pr(f,\{3, 4, 5\})\spcheck =[17, 50, 5]/72,  \\
&Pr(f,\{2, 3, 5\}) =  Pr(f,\{2, 4, 5\})\spcheck =[5, 16, 3]/24, 
\end{align*}
\vspace{-8mm}
\begin{align*}
&Pr(f,\{1, 2, 3, 4\})=Pr(f,\{3, 4, 5,6\})\spcheck = [25, 68, 26, 1]/120,       \\
&Pr(f,\{1, 2, 3, 5\}) = Pr(f,\{2, 4, 5, 6\})\spcheck = [20, 73, 26, 1]/120,       \\
&Pr(f,\{1, 2, 3, 6\}) =  Pr(f,\{1, 4, 5, 6\})\spcheck =[7, 178, 53, 2]/240,      \\
&Pr(f,\{1, 2, 4, 5\}) =     Pr(f,\{2, 3, 5, 6\})\spcheck = [10, 83, 26, 1]/120,  \\
&Pr(f,\{1, 2, 4, 6\}) =    Pr(f,\{1, 3, 5, 6\})\spcheck =[1, 89, 29, 1]/120,      \\
&Pr(f,\{1, 2, 5, 6\}) = [1, 59, 59, 1]/120, \\
&Pr(f,\{1, 3, 4, 5\}) =   Pr(f,\{2, 3, 4, 6\})\spcheck=[5, 83, 31, 1]/120,       \\
&Pr(f,\{1, 3, 4, 6\}) = [1, 59, 59, 1]/120, \\
&Pr(f,\{2, 3, 4, 5\}) = [1, 23, 23, 1]/48, \\
&Pr(f,S)= [1, 26, 66, 26, 1]/120\text{ if }\#S=5. 
\end{align*}

\noindent
{\bf Remark 2.}
In this case, the symmetry \eqref{eq10} holds.
To look for conjectural values, we adopt  the $10$-adic approximation besides the $d$-adic one
in the previous section.
That is, we observe the minimum of errors $\sum_i|c_m[i] -r(d\cdot c_m[i])/d|$ and
 $\sum_i|d\cdot c_m[i] -r(d\cdot c_m[i])|$ $(d\in D)$ for a sequence of vectors $c_m$.
In this case, we take for $D$ $\{d\mid 1\le d \le 500 $  and a prime divisor of $d$
is  $2,3$ or $5\}$.
Let $p_j$ be  the smallest prime number in $Spl(f)$ larger than $10^9j$.
To the extent of $p_j<10^{11}$ and $j>30$, the values of $Pr(f,S,p_j)$ support the above conjecture by this double-checking method.

We have
\begin{equation*}
{n\choose s}^{-1} \sum_{\#S=s}Pr(f,S)=
\begin{cases}
[1,1]/2! &\text{ if }s=2,
\\
[1,4,1]/3!& \text{ if }s=3,
\\
[1,11,11,1]/4!& \text{ if }s=4,
\\
[1,26,66,26,1]/5!& \text{ if }s=5,
\end{cases}
\end{equation*}
and
\begin{align}\label{eq12}
Pr(f,\{1,2,5\})&= Pr(f,\{1,3,4\}), \\\label{eq13}
Pr(f,\{2,5,6\})&= Pr(f,\{3,4,6\}),\\\label{eq14}
Pr(f,\{2,3,4\})+ Pr(f,\{1,5,6\})&= Pr(f,\{3,4,5\})+ Pr(f,\{1,2,6\}), \\
 \nonumber
Pr(f,\{2,3,5\})+ Pr(f,\{1,4,6\})&= Pr(f,\{1,3,5\})+ Pr(f,\{2,4,6\})=\\\label{eq15}
Pr(f,\{2,3,6\})+ Pr(f,\{1,4,5\})&=
Pr(f,\{2,4,5\})+ Pr(f,\{1,3,6\}),
\\\label{eq16}
Pr(f,\{1,2,5,6\})&=Pr(f,\{1,3,4,6\}).
\end{align}
$\mathbb{Q}(f)=\mathbb{Q}(\exp(2\pi i/7))$ is obvious.
The sequence $[1,59,59,1]$ for $S=\{1,2,5,\linebreak6\}$ and $\{1,3,4,6\}$ is given 
by $T_1(4,i)$ $(i=1,2,3,4)$, 
where $T_1(n,k)$ $(1\le k \le n)$ is defined by
$$
T_1(1,1)= 1,T_1(n,k)=(4n-4k+1)T_1(n-1,k-1)+(4k-3)T_1(n-1,k),
$$
and the sequence $[1,23,23,1]$ for $S=\{2,3,4,5\}$ is given by 
$T_2(4,i)\,(i=1,2,3,4)$ where $T_2(n,k)$ $(1\le k \le n)$ is defined by
$$
T_2(1,1)= 1,T_2(n,k)=(2n-2k+1)T_2(n-1,k-1)+(2k-1)T_2(n-1,k).
$$

\noindent
{\bf Example 2.} 
For the indecomposable polynomial 
$f=f_2=x^6-2x^5+11x^4+6x^3+16x^2+122x+127$, 
we expect 
\begin{align*}
& Pr(f,\{1, 2\}) = Pr(f,\{5, 6\})\spcheck = [139, 5]/144  , 
\\
&Pr(f,\{1, 3\}) =Pr(f,\{4, 6\}) \spcheck= [127, 17]/144,
\\
&Pr(f,\{1, 4\}) =Pr(f,\{3, 6\})\spcheck= [7, 2]/9,
\\
&Pr(f,\{1, 5\}) =Pr(f,\{2, 6\})\spcheck= [25, 11]/36,
\\
&Pr(f,\{1, 6\}) = [1, 1]/2,
\\
&Pr(f,\{2, 3\}) = Pr(f,\{4, 5\})\spcheck=[3, 1]/4 ,
\\
&Pr(f,\{2, 4\}) = Pr(f,\{3, 5\})\spcheck=[107, 37]/144,
\\
&Pr(f,\{2, 5\}) = [1, 1]/2,
\\
&Pr(f,\{3, 4\}) = [1, 1]/2,
\end{align*}
\vspace{-8mm}
\begin{alignat*}{2}
&Pr(f,\{1, 2, 3\}) = [49, 23, 0]/72  ,&&  Pr(f,\{4, 5, 6\}) = [0, 1, 3]/4   ,\quad(*)
\\
&Pr(f,\{1, 2, 4\}) = [37, 35, 0]/72  ,&& Pr(f,\{3, 5, 6\}) = [0, 23, 49]/72,\quad(*)
\\
&Pr(f,\{1, 2, 5\}) = [5, 13, 0]/18  , && Pr(f,\{2, 5, 6\}) = [0, 89, 55]/144     ,\quad(*)
\\
&Pr(f,\{1, 2, 6\}) = [28, 115, 1]/144 ,&\hspace{1mm}&  Pr(f,\{1, 5, 6\}) = [0, 13, 5]/18  ,\quad(*)
\\
&  Pr(f,\{1, 3, 4\}) = [5, 13, 0]/18,   &&Pr(f,\{3, 4, 6\}) = [0, 89, 55]/144,      \quad(*)
\\
&Pr(f,\{1, 3, 5\}) = [1, 3, 0]/4  ,    &&          Pr(f,\{2, 4, 6\}) =Pr(f,\{1, 3, 5\})\spcheck,
\\
&Pr(f,\{1, 3, 6\}) = [16, 121, 7]/144,&&Pr(f,\{1, 4, 6\}) = [1, 115, 28]/144,	   \quad(*)
\\
&Pr(f,\{1, 4, 5\}) = [3, 12, 1]/16 ,    &&           Pr(f,\{2, 3, 6\}) = [0, 3, 1]/4,\quad(*)
\\
&Pr(f,\{2, 3, 4\}) = [35, 101, 8]/144, &&Pr(f,\{3, 4, 5\}) = [0, 13, 5]/18  ,\quad(*)
\\
&Pr(f,\{2, 3, 5\}) = [29, 95, 20]/144   , &&Pr(f,\{2, 4, 5\}) = [8, 101, 35]/144,\quad(*)
\end{alignat*}
\begin{align*}
&Pr(f,\{1, 2, 3, 4\}) =Pr(f,\{3, 4, 5, 6\}) \spcheck= [31, 77, 36, 0]/144  , 
\\
&Pr(f,\{1, 2, 3, 5\}) =Pr(f,\{2, 4, 5, 6\})\spcheck=[19, 89, 36, 0]/144  ,
\\
&Pr(f,\{1, 2, 3, 6\}) =Pr(f,\{1, 4, 5, 6\})\spcheck= [0, 3, 1, 0]/4  ,   
\\
&Pr(f,\{1, 2, 4, 5\}) =Pr(f,\{2, 3, 5, 6\})\spcheck= [1, 26, 9, 0]/36  ,
\\
&Pr(f,\{1, 2, 4, 6\}) = Pr(f,\{1, 3, 5, 6\})\spcheck=[0, 107, 37, 0]/144   ,
\\
&Pr(f,\{1, 2, 5, 6\}) = [0, 1, 1, 0]/2,
\\
&Pr(f,\{1, 3, 4, 5\}) = Pr(f,\{2, 3, 4, 6\})\spcheck=[0, 25, 11, 0]/36 ,
\\
&Pr(f,\{1, 3, 4, 6\}) = [0, 1, 1, 0]/2 ,
\\
&Pr(f,\{2, 3, 4, 5\}) = [0, 1, 1, 0]/2,
\\
&Pr(f,S) = [0, 1, 2, 1, 0]/4\text{ if }\#S=5.
\end{align*}

\noindent
{\bf Remark 3.}
Conjectural values are determined by the double-checking method above.
In this case, the symmetry does not hold for lines with tag $(*)$ for $\#S=3$.
In this case, we have
\begin{equation*}
{n\choose s}^{-1} \sum_{\#S=s}Pr(f,S)=
\begin{cases}
 [1,1]/2 &\text{ if } s = 2,
\\
[3,13,4]/20 &\text{ if } s = 3,
\\
 [1,19,19,1]/40&\text{ if } s = 4,
\\
[0,1,2,1,0]/4&\text{ if } s = 5,
\end{cases}
\end{equation*}
and \eqref{eq12}, \eqref{eq13}, and \eqref{eq16} hold.
Putting $t(n,m)=2A(n+1,m+1)-{n\choose m}$,
we see that $[1,19,19,1]=[t(3,0),t(3,1),\linebreak[3] t(3,2),t(3,3)]$.
The polynomial $x^2/4+2x^3/4+x^4/4$ corresponding to $[0,1,2,1,0]/4$ for
 $\#S=5$ above is equal to $(x/2+x^2/2)^2$. That
 is, the generating polynomial of $Pr(f,S)$ is identical to the square of
 the generating polynomial of densities of the two-dimensional uniform
 distribution (cf.\ Section~4).
 This shows that a sequence of points $(r_{\sigma(1)},\dots,r_{\sigma(5)})/p$ for $\sigma\in S_6$ is 
 not uniformly distributed in $[0,1)^5$.

\noindent
Let $\alpha$ be a root of $f$.
Then, we have $\mathbb{Q}(\alpha)=\mathbb{Q}(f)=\mathbb{Q}(\exp(2\pi i/7))$,
and  over a quadratic subfield $M_2=\mathbb{Q}(\sqrt{-7})$  
of $\mathbb{Q}(\alpha)$, $f$
 has a divisor   $g_3(x):=x^3 - x^2 +(\sqrt{-7}+5) x +3\sqrt{-7}+8$,
for which the coefficient of $x^2$ is the rational number $-1$.
This is an example of the first case of Proposition\,\ref{prop5} below.

The densities for the polynomial $f_2(-x)$ are the same as those for the next polynomial
 $f_3(x)$; that is,
\begin{equation}\label{eq17}
 Pr(f_2(-x),S)=Pr(f_3(x),S).
\end{equation}

\noindent
{\bf Example 3.} 
For the indecomposable polynomial 
$f = f_3=x^6-2x^3+9x^2+6x+2$, 
we expect that
\begin{equation}\label{eq18}
Pr(f_3,S)=Pr(f_2,S\spcheck)\spcheck. 
\end{equation}

\noindent
{\bf Remark 4.}
Conjectural values are determined by the double-checking method.
Let us make a remark from a theoretical viewpoint.
Because we can check that the polynomial
 $f_2(x)$ satisfies the assumption \textnormal{(A)} by using Proposition \ref{prop4},
we have $Pr(f_2(-x),S)\spcheck =Pr(f_2(x),S\spcheck)$,
and hence, \eqref{eq17} and \eqref{eq18} are equivalent.
Polynomials $f_3(x)$ and $f_3(-x)$ have the same densities, and assumption
 \textnormal{(A)} on $S$ is not satisfied for
 either polynomial
if  $\#S=3$ and $S\ne\{1,3,5\},\{2,4,6\}$.

\noindent
Let $\alpha$ be a root of $f$. Then,
$\mathbb{Q}(\alpha)$  is a splitting field of the polynomial $x^3-3x-14$, 
which is the composite field of $\mathbb{Q}(\sqrt{-1})$ and a field defined by $x^3-3x-14=0$. 
Over a quadratic subfield $M_2=\mathbb Q(\sqrt{-1})$  of  $\mathbb{Q}(\alpha)$, 
$f$  has a divisor   $g_3(x):=x^3 +0\cdot x^2-3\sqrt{-1}x-\sqrt{-1}-1 $,
whose second leading coefficient is a rational number $0$.
This is also an example of the first case of Proposition\,\ref{prop5}.

\noindent
{\bf Example 4.} 
For an indecomposable polynomial 
$f =
f_4=x^6-9x^5-3x^4+139x^3+93x^2-627x+1289  $, we expect

\begin{align*}
& Pr(f,\{1, 2\}) =  Pr(f,\{5, 6\})\spcheck =[277, 11]/288,
\\
&Pr(f,\{1, 3\}) =  Pr(f,\{4, 6\})\spcheck =   [661, 59]/720,
\\
&Pr(f,\{1, 4\}) =Pr(f,\{3, 6\})\spcheck= [38, 7]/45,
\\
&Pr(f,\{1, 5\}) =  Pr(f,\{2, 6\})\spcheck= [559, 161]/720 ,
\\
&Pr(f,\{1, 6\}) = [1, 1]/2,
\\
&Pr(f,\{2, 3\}) =  Pr(f,\{4, 5\})\spcheck =[33, 7]/40,
\\
&Pr(f,\{2, 4\}) =  Pr(f,\{3, 5\})\spcheck =[47, 13]/60,  
\\
&Pr(f,\{2, 5\}) = [1, 1]/2,
\\
&Pr(f,\{3, 4\}) = [1, 1]/2,
\end{align*}
\begin{align*}
&Pr(f,\{1, 2, 3\}) = Pr(f,\{4, 5, 6\})\spcheck = [475, 164, 9]/648 ,
 \\
&Pr(f,\{1, 2, 4\}) = Pr(f,\{3, 5, 6\})\spcheck = [649, 416, 15]/1080,
\\
&Pr(f,\{1, 2, 5\}) =Pr(f,\{2, 5, 6\})\spcheck = [314, 751, 15]/1080,
\\
&Pr(f,\{1, 2, 6\}) = Pr(f,\{1, 5, 6\})\spcheck = [208, 857, 15]/1080 ,
\\
&Pr(f,\{1, 3, 4\}) = Pr(f,\{3, 4, 6\})\spcheck =  [314, 751, 15]/1080  ,
\\
&Pr(f,\{1, 3, 5\}) = Pr(f,\{2, 4, 6\})\spcheck= [15, 56, 1]/72          ,   
\\
&Pr(f,\{1, 3, 6\}) =  Pr(f,\{1, 4, 6\})\spcheck = [433, 2726, 81]/3240,
\\
&Pr(f,\{1, 4, 5\}) =  Pr(f,\{2, 3, 6\})\spcheck = [539, 2520, 181]/3240,
\\
&Pr(f,\{2, 3, 4\}) = Pr(f,\{3, 4, 5\})\spcheck =  [722, 2375, 143]/3240,
\\
&Pr(f,\{2, 3, 5\}) =  Pr(f,\{2, 4, 5\})\spcheck  = [639, 2314, 287]/3240.
\end{align*}
\begin{align*}
&Pr(f,\{1, 2, 3, 4\}) =  Pr(f,\{3, 4, 5, 6\})\spcheck=[53, 175, 56, 4]/288,  \\
&Pr(f,\{1, 2, 3, 5\}) =Pr(f,\{2, 4, 5, 6\})\spcheck = [101,469,140,10]/720 , \\
&Pr(f,\{1, 2, 3, 6\}) =  Pr(f,\{1, 4, 5, 6\})\spcheck=  [17, 268, 70, 5]/360 , \\
&Pr(f,\{1, 2, 4, 5\}) = Pr(f,\{2, 3, 5, 6\})\spcheck= [24, 261, 70, 5]/360 ,\\
&Pr(f,\{1, 2, 4, 6\}) =  Pr(f,\{1, 3, 5, 6\})\spcheck =  [5, 277, 73, 5]/360   , \\
&Pr(f,\{1, 2, 5, 6\}) = [1, 35, 35, 1]/72,\\
&Pr(f,\{1, 3, 4, 5\}) = Pr(f,\{2, 3, 4, 6\})\spcheck =  [27, 515, 168, 10]/720, \\
&Pr(f,\{1, 3, 4, 6\}) = [1, 35, 35, 1]/72,
\\
&Pr(f,\{2, 3, 4, 5\}) = [7, 137, 137, 7]/288,
\\
&Pr(f,S) =  [1, 14, 42, 14, 1]/72\text{ if }\#S=5.
\end{align*}

\noindent
{\bf Remark 5.}
The conjectural values above were determined by the double-checking
method  for $p <10^{13}$ and $D = \{d \mid d \le 4000$ 
and a prime divisor of $d$ is  $2,3$ or $5\}$.
The symmetry \eqref{eq10} and 
\eqref{eq12}--\eqref{eq16} hold.

\noindent
In this case, we expect
\begin{equation*}
{n\choose s}^{-1} \sum_{\#S=s}Pr(f,S)= 
\begin{cases}
 [1,1]/2!&\text{ if } s= 2,
\\
[1,4,1]/3!&\text{ if } s= 3,
\\
[1,11,11,1]/4! &\text{ if } s= 4,
\\
[1,14,42,14,1]/72 &\text{ if } s= 5.
\end{cases}
\end{equation*}
Substituting $T(m,n):= (mn)!\prod_{k=0}^{n-1}(k!/(k+m)!)$ $(m,n\ge1)$ $\textnormal{( \cite{Sl})}$,
 which is called a  multidimensional Catalan number,
we see $T(m,6-m)=1,14,42,14,1$ according to $m = 1,2,3,4,5$, respectively.

\noindent
Let $\alpha$ be a root of $f$.
Then, over a cubic subfield $M_3$ defined by $\beta^3 - \beta^2 -2\beta+1=0$ of 
$\mathbb{Q}(\alpha)$, $f$
 has a divisor   $g_2(x):=x^2 +( 6\beta-5) x+9\beta^2-15\beta+8$,
whose discriminant is the rational number $-7$.
This is an example of the second case of Proposition\,\ref{prop5}.

\noindent
We have $\mathbb{Q}(\alpha) = \mathbb{Q}(\exp(2\pi i/7))$ and
 $Pr(f_4(-x),S)=Pr(f_4(x), S)$.

\noindent
{\bf Remark 6.}
With respect to the polynomials on pp.\ 86--87 in \cite{K5}, the densities defined here for the polynomials in cases (1)--(5) are equal to the ones given in Examples 2, 3, 1, 1, and 4, respectively.

We can consider a more general  density.
For a real function $t=t(x_1,\dots,x_n)$, 
we define $Pr(f,t,X):=[\dots,F_0,F_1,\dots]$ by
$$
F_k:=\frac{\#\{p\in Spl(f,X)\mid \lceil t(r_1/p,\dots,r_n/p)\rceil=k\}} 
{\#Spl(f,X)}
$$
and put $Pr(f,t):=\lim_{X\to\infty}Pr(f,t,X)$.

\noindent
For example, for $f = x^3-3x+1$, we expect
$$
Pr(f,4x_i)_{[1..4]}=
\begin{cases}
 [9,5,2,0]/16&(i=1),
\\
[3,5,5,3]/16&(i=2),
\\
[0,2,5,9]/16&(i=3),
\end{cases}
$$
where $v_{[n..m]}$ means a subsequence $[v_n,\dots,v_m]$ for
$v=[\dots,v_0,v_1,\dots]$.

\noindent
$Pr(f,4x_1)[4]=Pr(f,4x_3)[1]=0$ is not difficult to prove.

\section{Arithmetic aspects}
We recall that in this paper, a polynomial is supposed to be an irreducible monic one with integer coefficients, and hereinafter, we neglect the global order \eqref{eq4}. To analyze the case of  $\deg f=6$, we prepare the following.
\begin{proposition}\label{prop5}
Let  $f(x)=x^{2m}+a_{2m-1}x^{2m-1}+\dots$ be a polynomial  of even degree $2m$
and let $\alpha$ be a root of $f(x)$ and put $F= \mathbb{Q}(\alpha)$.
Let $p$  be a sufficiently large prime number in $ Spl(f)$, 
and let $r_1,\dots,r_{2m}\in\mathbb{Z}$ be roots of $f(x)$ modulo $p$, that is,
\begin{equation}\label{eq19}
f(x) \equiv \prod_{i=1}^{2m}(x - {r}_i) \bmod p. 
\end{equation}
\begin{enumerate}
 \item[$(1)$] 
 Suppose that $F$ contains a quadratic subfield $M_2$ and that
 the coefficient of $x^{m-1}$ of  the monic minimal polynomial $g_m(x)$ of  $\alpha$ over $M_2$
  be a rational integer $a$.    
 Then, for the decomposition $pO_{M_2}={\mathfrak p}_1{\mathfrak p}_2$ to the product of prime ideals ${\mathfrak p}_i$ of $M_2$,
we  can renumber the roots ${r}_i$ so that
\begin{equation}\label{eq20}
 g_m(x)\equiv\prod_{i=1}^m(x-{r}_i)\bmod {\mathfrak p}_1,\quad
g_m(x)\equiv\prod_{i=m+1}^{2m}(x-{r}_i)\bmod {\mathfrak p}_2,
\end{equation}
and we have the linear relation
\begin{equation}\label{eq21}
 {r}_1+ \dots+ {r}_m \equiv {r}_{m+1}+ \dots+ {r}_{2m} \equiv -a\bmod p.
\end{equation}
Moreover, we have $f(x)= x^{2m} +2ax^{2m-1}+\dots$.
\item[$(2)$]
Suppose that $F$ contains  a subfield $M_{m}$ of  degree $m$ 
and that  the discriminant of the monic minimal quadratic polynomial $g_2(x)$ of $ \alpha$ over $M_m$ is a  rational integer $D$. 
Then, we can renumber the roots ${r}_i$ so that
 we have
\begin{equation}\label{eq22}
 g_2(x)\equiv(x-{r}_i)(x-{r}_{i+m})\bmod {\mathfrak p}_i\quad
(i=1,\dots,m)
\end{equation}
for the decomposition
       $pO_{M_m}={\mathfrak p}_1\dots{\mathfrak p}_m$ to the product of prime ideals, 
and we have the quadratic relation
\begin{equation}\label{eq23}
 ({r}_i - {r}_{i+m})^2 \equiv D \bmod p\quad(i=1,\dots,m).
 \end{equation}
Moreover, $F$ contains a quadratic field $\mathbb{Q}(\sqrt{D}).$
\end{enumerate}
\end{proposition}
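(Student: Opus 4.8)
The plan is to prove both parts by the same mechanism: translate the factorization of $f$ modulo $p$ into the splitting of $p$ in the subfield and in $F$, using the dictionary \eqref{eq11} between the roots $r_i$ and the prime ideals of $F$ above $p$. Throughout I take $p$ large enough that \eqref{eq11} applies, the $r_i$ are distinct modulo $p$, and $p$ is unramified. Since $p\in Spl(f)$, $p$ splits completely in the Galois closure $\mathbb{Q}(f)$ and hence in every subfield; in particular it splits completely in $M_2$ (resp.\ $M_m$) and in $F$. The first preliminary step, common to both parts, is that the monic minimal polynomial $g_m$ (resp.\ $g_2$) of $\alpha$ over the subfield divides the monic integral polynomial $f$; since its roots are among the roots of $f$, which are algebraic integers, its coefficients lie in $O_{M_2}[x]$ (resp.\ $O_{M_m}[x]$), and division by a monic integral polynomial shows the complementary cofactor is integral too. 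This is what lets me reduce $g_m$ (resp.\ $g_2$) modulo the primes of the subfield.

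For part $(1)$ I would write $pO_{M_2}=\mathfrak p_1\mathfrak p_2$ and $pO_F=\mathfrak P_1\cdots\mathfrak P_{2m}$, each residue field being $\mathbb F_p$. Because $[F:M_2]=m$, exactly $m$ of the $\mathfrak P_k$ lie over $\mathfrak p_1$ and $m$ over $\mathfrak p_2$; renumber the roots so that $\mathfrak P_1,\dots,\mathfrak P_m$ lie over $\mathfrak p_1$. From $\alpha\equiv r_k\bmod\mathfrak P_k$ and $g_m(\alpha)=0$ we get $g_m(r_k)\equiv0\bmod\mathfrak p_1$ for $k\le m$, so each such $r_k$ is a root of $g_m\bmod\mathfrak p_1$; as there are $m$ of them, all distinct, and $\deg g_m=m$, we obtain $g_m\equiv\prod_{k=1}^m(x-r_k)\bmod\mathfrak p_1$ and symmetrically modulo $\mathfrak p_2$, which is \eqref{eq20}. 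Comparing the coefficient of $x^{m-1}$, equal to the rational integer $a$, with the corresponding symmetric function of the $r_k$, and using $\mathfrak p_i\cap\mathbb Z=p\mathbb Z$, yields \eqref{eq21}. Summing the two congruences in \eqref{eq21} gives $a_{2m-1}\equiv 2a\bmod p$ for every large $p\in Spl(f)$; since $Spl(f)$ is infinite and both sides are fixed integers, $a_{2m-1}=2a$.

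Part $(2)$ runs in parallel, but now $[F:M_m]=2$, so each of the $m$ primes $\mathfrak p_i$ of $M_m$ above $p$ carries exactly two primes $\mathfrak P$ of $F$; renumber so that $\mathfrak P_i$ and $\mathfrak P_{i+m}$ lie over $\mathfrak p_i$. The same congruence $g_2(r_k)\equiv0\bmod\mathfrak p_i$ shows that $r_i,r_{i+m}$ are precisely the two roots of the quadratic $g_2\bmod\mathfrak p_i$, giving \eqref{eq22}. Reducing the identity $\mathrm{disc}(g_2)=D$ modulo $\mathfrak p_i$ turns the left side into $(r_i-r_{i+m})^2$, and since $D\in\mathbb Z$ this descends to the congruence $(r_i-r_{i+m})^2\equiv D\bmod p$ of \eqref{eq23}. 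Finally, writing $g_2(x)=x^2+bx+c$ with $b\in M_m\subseteq F$, the root $\alpha=(-b+\sqrt D)/2$ gives $\sqrt D=2\alpha+b\in F$; irreducibility of $g_2$ over $M_m$ forces $D$ to be a non-square in $M_m$, hence in $\mathbb Q$, so $\mathbb Q(\sqrt D)$ is a genuine quadratic subfield of $F$.

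The step I expect to require the most care is the renumbering, namely verifying that the $2m$ primes $\mathfrak P_k$ of $F$ above $p$ distribute over the primes of the subfield exactly $m$-to-one in part $(1)$ and two-to-one in part $(2)$, and that under \eqref{eq11} the roots grouped over a fixed $\mathfrak p_i$ are precisely the roots of the reduced factor there. This rests on the multiplicativity of ramification and residue degrees in the tower $\mathbb Q\subseteq M\subseteq F$ together with $p$ being totally split, which forces every local degree to be $1$; the distinctness of the $r_i$ modulo $p$ for large $p$ is what guarantees the reduced factors are squarefree, so that the counting of roots is exact and the renumbering is well defined.
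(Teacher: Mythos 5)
Your proposal is correct and follows essentially the same route as the paper: numbering the primes $\mathfrak P_i$ of $F$ so that $\alpha\equiv r_i\bmod\mathfrak P_i$, grouping them over the primes of the subfield, deducing $g_m(r_i)\in\mathfrak P_i\cap M_2=\mathfrak p_j$ (resp.\ $g_2(r_i)\in\mathfrak p_i$) to get \eqref{eq20} and \eqref{eq22}, and then comparing coefficients and using $\mathfrak p\cap\mathbb Z=p\mathbb Z$ to obtain \eqref{eq21}, \eqref{eq23}, and $a_{2m-1}=2a$. Your added details (integrality of the coefficients of $g_m$, counting distinct roots to justify the full factorization, and non-squareness of $D$ so that $\mathbb Q(\sqrt D)$ is genuinely quadratic) are correct refinements of steps the paper leaves implicit.
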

\begin{proof}
We number the roots $r_i$ of $f(x)\equiv0\bmod p$ and prime ideals
${\mathfrak P}_i$ of $F$ lying above $p$ so that $\alpha \equiv r_i\bmod {\mathfrak P}_i$.
Let us prove case (1) above. 
First, we note that the degree of $g_m(x)\in O_{M_2}[x]$ is $m$. 
We may assume that $pO_{M_2}={\mathfrak p}_1{\mathfrak p}_2$ and
${\mathfrak p}_1O_F={\mathfrak P}_1\dots{\mathfrak P}_m$ and  ${\mathfrak p}_2
O_F={\mathfrak P}_{m+1}\dots{\mathfrak P}_{2m}$,
which imply ${\mathfrak P}_i\cap M_2={\mathfrak p}_1\,(i=1,\dots,m)$ and
${\mathfrak P}_i\cap M_2={\mathfrak p}_2\,(i=m+1,\dots,2m)$.
The assumptions $g_m(\alpha)= 0$ and $\alpha\equiv{r}_i\bmod{\mathfrak P}_i$ imply
 $g_m({r}_i)\equiv0 \bmod{\mathfrak P}_i$; 
Hence,  
$$ 
g_m({r}_i)\in{\mathfrak P}_i \cap
M_2=
\begin{cases}
{\mathfrak p}_1&(i=1,\dots,m),\\
{\mathfrak p}_2&(i=m+1,\dots,2m),
\end{cases}$$
which concludes \eqref{eq20}.
Therefore, the definition of $a$ implies
 $a+\sum_{i=1}^m r_i \in{\mathfrak p}_1 \cap{\mathbb{Z}}=p{\mathbb{Z}}$ and 
$a+\sum_{i=m+1}^{2m} {r}_i \in{\mathfrak p}_2\cap{\mathbb{Z}} =p{\mathbb{Z}}$
by
$a,{r}_i\in{\mathbb{Z}}$; hence, we get \eqref{eq21}.
Equations $a_{2m-1}+\sum_{i=1}^{2m} {r}_i\equiv 0\bmod p$
and \eqref{eq21}
imply $a_{2m-1}\equiv 2a \bmod p$; hence, $a_{2m-1}=2a$,
since $p$ is sufficiently large.
Thus, we have $f(x)=x^{2m} + 2ax^{2m-1}+\dots$.

\noindent
Next, let us prove case (2) above.
Put $g_2(x) =x^2+Ax+B \,(A,B\in O_{M_m})$.
The assumption $g_2(\alpha ) = 0$ implies
$g_2({r}_i)\equiv0\bmod{\mathfrak P}_i$, that is, $g_2(r_i)\in{\mathfrak P}_i$ $(i=1,\dots,2m)$.
By renumbering, we may assume that
\begin{equation*}
 pO_{M_m}={\mathfrak p}_1\dots{\mathfrak p}_m,\quad{\mathfrak p}_iO_F={\mathfrak P}_i{\mathfrak P}_{i+m}\quad(i=1,\dots,m).
\end{equation*}
Then we have
\begin{equation*}
 g_2({r}_i)\in{\mathfrak P}_i\cap M_m={\mathfrak p}_i,\quad
g_2({r}_{i+m}) \in{\mathfrak P}_{i+m}\cap M_m={\mathfrak p}_i\quad(i=1,\dots,m),
\end{equation*}
that is \eqref{eq22}.
Therefore, we have
\begin{equation*}
 D\equiv ({r}_i+{r}_{i+m})^2 - 4{r}_i{r}_{i+m}
\equiv ({r}_i-{r}_{i+m})^2 \bmod{\mathfrak p}_i\quad(i=1,\dots,m).
\end{equation*}
Since $D$ and ${r}_i$ are rational integers and ${\mathfrak p}_i\cap{\mathbb{Z}}=p{\mathbb{Z}}$, 
we have \eqref{eq23}.
Since the difference  $\sqrt{D}$  of $\alpha$ and its conjugate over
$M_m$ is in $F$
and $D$ is a rational integer, $F$ contains a quadratic field $\mathbb{Q}(\sqrt{D})$.
\end{proof}

A sufficient condition for the assumption in (1) is as follows.
\begin{proposition}
If $f(x)= g(h(x))$ holds for a polynomial $g(x)$ of degree $2$ and a
polynomial $h(x)$ of degree $m\,(>1)$, 
then the assumption in $(1)$ of Proposition\,\ref{prop5} is satisfied.
\end{proposition}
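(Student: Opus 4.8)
The plan is to read off both the quadratic subfield and the relevant minimal polynomial directly from the decomposition $f=g(h(x))$. First I would set $\beta:=h(\alpha)$ and note that $g(\beta)=g(h(\alpha))=f(\alpha)=0$, so $\beta$ is a root of the quadratic $g$ and hence $[\mathbb{Q}(\beta):\mathbb{Q}]\le 2$. If $\beta$ were rational, then $\alpha$ would be a root of $h(x)-\beta\in\mathbb{Q}[x]$, a polynomial of degree $m<2m$, contradicting the irreducibility of $f$, which forces $[\mathbb{Q}(\alpha):\mathbb{Q}]=2m$. Therefore $M_2:=\mathbb{Q}(\beta)$ is a genuine quadratic subfield of $F=\mathbb{Q}(\alpha)$, establishing the first requirement in case $(1)$ of Proposition~\ref{prop5}.

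Next I would normalize $g$ and $h$ to be monic with integer coefficients. Comparing leading coefficients in $f=g(h(x))$ shows that $g$ is monic and that the leading coefficient of $h$ is $\pm1$; replacing $h$ by $-h$ and $g(y)$ by $g(-y)$ when necessary (an operation that preserves both integrality and monicity of $g$) allows me to assume that $g$ and $h$ both lie in $\mathbb{Z}[x]$ and are monic. I would then identify the minimal polynomial of $\alpha$ over $M_2$: the polynomial $h(x)-\beta$ lies in $M_2[x]$, is monic of degree $m$, and vanishes at $\alpha$. Since $[\mathbb{Q}(\alpha):M_2]=[\mathbb{Q}(\alpha):\mathbb{Q}]/[M_2:\mathbb{Q}]=2m/2=m$, any monic degree-$m$ polynomial over $M_2$ having $\alpha$ as a root must be the minimal polynomial $g_m(x)$ of $\alpha$ over $M_2$; hence $g_m(x)=h(x)-\beta$.

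Finally, because $\beta$ affects only the constant term, the coefficient of $x^{m-1}$ in $g_m(x)=h(x)-\beta$ coincides with the coefficient of $x^{m-1}$ in $h(x)$, which is a rational integer. This is precisely the integer $a$ demanded in the hypothesis of case $(1)$, so the assumption of Proposition~\ref{prop5}$(1)$ holds. As a consistency check, expanding $g(h(x))$ with $h(x)=x^m+ax^{m-1}+\cdots$ recovers $f(x)=x^{2m}+2ax^{2m-1}+\cdots$, in agreement with the conclusion of that proposition.

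I do not expect a serious obstacle; the only point requiring care is the degree bookkeeping. Specifically, one must verify that $\beta\notin\mathbb{Q}$ (which rests squarely on the irreducibility of $f$, so that $\alpha$ has full degree $2m$) and that $h(x)-\beta$ actually attains the degree $m=[\mathbb{Q}(\alpha):M_2]$, guaranteeing that it is the minimal polynomial itself rather than a proper multiple of it.
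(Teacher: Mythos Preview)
Your argument is correct and follows essentially the same route as the paper: set $\beta=h(\alpha)$, show $M_2=\mathbb{Q}(\beta)$ is quadratic, and identify $g_m(x)=h(x)-\beta$ so that its $x^{m-1}$-coefficient is the rational integer coming from $h$. The only cosmetic difference is that you deduce $g_m$ is the minimal polynomial from the degree equality $[\mathbb{Q}(\alpha):M_2]=m$, whereas the paper argues irreducibility of $h(x)-\beta$ directly by observing that any factorization $g_m=k_1k_2$ over $M_2$ would make $k_i\overline{k_i}\in\mathbb{Q}[x]$ a proper factor of $f$; your tower argument is a bit more streamlined.
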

\begin{proof}
Let $\alpha$ be a root of $f(x)$.
Substituting $\beta := h(\alpha)$ and $M_2:=\mathbb{Q}(\beta)$,
we have $g(\beta)=f(\alpha)=0$; hence, $M_2$ is a quadratic field and 
 $g(x)$ is $(x-\beta)(x-\overline{\beta})$ for a conjugate $\overline{\beta}\in M_2$ of
$\beta$ over $\mathbb{Q}$.
Then, $g_m(x):=h(x)-\beta$ satisfies $f(x) = g_m(x)(h(x)-\overline{\beta})$, $g_m(\alpha)=0$, and the second leading coefficient of $g_m(x)$, which is equal to  that of $h(x)$, is rational.
If $g_m(x)$ is reducible over $M_2$, there is a decomposition
$g_m(x)=k_1(x)k_2(x)$ with $k_i(x) \in M_2[x]$ and $\deg k_i>1$.
Thus, $f(x)=(h(x)-\beta)(h(x)-\overline{\beta})=g_m(x)\overline{g_m(x)}$  is divisible by
a polynomial $k_i(x)\overline{k_i(x)}\in\mathbb{Q}[x]$,
which contradicts the irreducibility of $f(x)$.
\end{proof}

Let us make a few comments on the relations between \eqref{eq20} and the distribution.
\begin{lemma}
Keep the case $(1)$ in Proposition\,\ref{prop5} and assume that $0\le r_i<p$ $(1\le i \le 2m)$.
Substituting $C_p(g,{\mathfrak p}_1) :=(a+\sum_{i=1}^m{r}_i)/p\linebreak[3]\in{\mathbb{Z}},
\,C_p(g,{\mathfrak p}_2) :=(a+\sum_{i=m+1}^{2m}{r}_i)/p\in{\mathbb{Z}}$,
we have $C_p(f) =C_p(g,{\mathfrak p}_1)+C_p(g,{\mathfrak p}_2) $ and
\begin{equation}\label{eq24}
 C_p(g,{\mathfrak p}_1)=\lceil({r}_1+\dots+{r}_{m-1})/p\rceil ,\,
 C_p(g,{\mathfrak p}_2)=\lceil({r}_{m+1}+\dots+{r}_{2m-1})/p\rceil 
\end{equation} 
except finitely many primes $p$.
 \end{lemma}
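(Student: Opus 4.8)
The plan is to handle the two assertions separately: the additive identity $C_p(f)=C_p(g,\mathfrak{p}_1)+C_p(g,\mathfrak{p}_2)$ is essentially formal, while the ceiling identity \eqref{eq24} rests on the same ``boundary'' mechanism that underlies Proposition~\ref{prop1}.

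First I would record that $C_p(g,\mathfrak{p}_1)$ and $C_p(g,\mathfrak{p}_2)$ are genuinely integers: the linear relations \eqref{eq21} say precisely that $a+\sum_{i=1}^m r_i\equiv 0$ and $a+\sum_{i=m+1}^{2m}r_i\equiv 0 \bmod p$, which is the integrality of the two quotients. Adding the defining expressions gives $C_p(g,\mathfrak{p}_1)+C_p(g,\mathfrak{p}_2)=(2a+\sum_{i=1}^{2m}r_i)/p$. By the last assertion of Proposition~\ref{prop5}(1) we have $a_{2m-1}=2a$, so \eqref{eq2} reads $2a+\sum_{i=1}^{2m}r_i=C_p(f)p$, and the identity $C_p(f)=C_p(g,\mathfrak{p}_1)+C_p(g,\mathfrak{p}_2)$ follows immediately.

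Next I would reduce \eqref{eq24} to an elementary inequality. Writing $T:=r_1+\dots+r_{m-1}$, the definition of $C_p(g,\mathfrak{p}_1)$ gives $C_p(g,\mathfrak{p}_1)\,p-T=a+r_m$. Since $C_p(g,\mathfrak{p}_1)$ is an integer, the convention $x\le\lceil x\rceil<x+1$ shows that $\lceil T/p\rceil=C_p(g,\mathfrak{p}_1)$ holds exactly when $0\le a+r_m<p$; the same computation with $T':=r_{m+1}+\dots+r_{2m-1}$ reduces the second half of \eqref{eq24} to $0\le a+r_{2m}<p$. Thus everything comes down to showing that, for all but finitely many $p$, the dropped roots $r_m$ and $r_{2m}$ avoid a boundary strip of width $|a|$ (lying near $p$ when $a\ge 0$ and near $0$ when $a<0$).

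This last point is the arithmetic heart of the matter and the step I expect to be the main obstacle. If $r_m$ lay in such a strip, then $r_m\equiv s\bmod p$ or $r_m\equiv -s\bmod p$ for some fixed integer $s$ with $0\le s\le |a|$; because $r_m$ is a root of $f(x)\bmod p$, this forces $f(s)\equiv 0$ or $f(-s)\equiv 0\bmod p$. But $f$ is irreducible of degree $>1$, so each of the finitely many integers $f(\pm s)$ (including $f(0)=a_0$) is nonzero, and hence is divisible by $p$ for only finitely many primes $p$. Discarding this finite set of $p$ gives $0\le a+r_m<p$ and, by the identical argument, $0\le a+r_{2m}<p$, which establishes \eqref{eq24}. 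The only care required is the sign bookkeeping: when $a\ge 0$ the lower bound $a+r_m\ge 0$ is automatic and one excludes roots near $p$, whereas when $a<0$ the upper bound $a+r_m<p$ is automatic and one excludes roots near $0$; in both cases the exclusion is controlled by the same finite set of nonzero values $f(\pm s)$, exactly as in the proof of Proposition~\ref{prop1}.
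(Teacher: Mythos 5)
Your proposal is correct and follows essentially the same route as the paper: the sum identity comes from $a_{2m-1}=2a$ together with \eqref{eq2}, and the ceiling identity \eqref{eq24} is reduced to the bound $0\le a+r_m<p$ (resp.\ $0\le a+r_{2m}<p$), which fails only when the dropped root lies in a strip of width $|a|$ near $0$ or $p$, forcing $f(\pm s)\equiv 0\bmod p$ for some fixed $s$ and hence excluding only finitely many primes by irreducibility. The paper phrases this last step as a contradiction over infinitely many primes rather than by bounding the set of bad primes directly, but the mechanism is identical.
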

\begin{proof}
The definition \eqref{eq2} of $C_p(f)$ implies $C_p(f)p = 2a+\sum_{i=1}^{2m}
{r}_i=(a+\sum_{i=1}^m{r}_i)+(a+\sum_{i=m+1}^{2m}{r}_i)$, i.e., $C_p(f) =C_p(g,{\mathfrak p}_1)+C_p(g,{\mathfrak p}_2) $.
Substituting $k =\lceil({r}_1+\dots+{r}_{m-1})/p\rceil  $,
we have $({r}_1+\dots+{r}_{m-1})/p \le k <( {r}_1+\dots+
{r}_{m-1})/p+1 $.
Hence, 
$C_p(g,{\mathfrak p}_1) - ({r}_m+a)/p \le k< C_p(g,{\mathfrak p}_1) -
({r}_m+a)/p+1$, and so
$$ - ({r}_m+a)/p \le k -  C_p(g,{\mathfrak p}_1) <-({r}_m+a)/p+1. $$
If $k -  C_p(g,{\mathfrak p}_1) \le -1$ holds, then we have $ - ({r}_m+a)/p\le-1 $;
hence, $1\le p-{r}_m\le a$.
If this inequality holds for infinitely many primes, there is an integer
$r$ between $1$ and $a$ such that $p-{r}_m= r$ for infinitely many
primes,
which implies $f(-r)\equiv f({r}_m)\equiv 0 \bmod p$, hence a
contradiction $f(-r)=0.$
Thus, $k -  C_p(g,{\mathfrak p}_1) \ge0$ holds.
Next, suppose that $k -  C_p(g,{\mathfrak p}_1) \ge 1$ holds for infinitely many
primes.
Then, we have $ ({r}_m+a)/p<0 $,
and hence, $0\le {r}_m< -a$ for infinitely many primes, which is also a
contradiction similar to the above.
Hence, we have $k -  C_p(g,{\mathfrak p}_1) =0$.
Another equality is similarly proved.
\end{proof}
Keeping and continuing the above,
\eqref{eq8} implies
\begin{equation*}
 \lim_{X\to\infty}F_k(f,S,X)
=\lim_{X\to\infty}\displaystyle\frac{\#\{p\in Spl(f,X)\mid C_p(g,{\mathfrak p}_1)+C_p(g,{\mathfrak p}_2)=k\}}
{\#Spl(f,X)}
\end{equation*}
 for any subset $S$ with $\#S=n-1$.
We note that there are $2(m!)^2$ ways of choosing  points
$({r}_1/p,\dots,{r}_{m-1}/p,
{r}_{m+1}/p,\dots,{r}_{2m-1}/p)\in[0,1)^{2(m-1)}$
by two ways for ${\mathfrak p}_1,{\mathfrak p}_2$  and $m!$ ways of choosing
 ${r}_1,\dots,{r}_{m-1}$ (resp. ${r}_{m+1},\dots,{r}_{2m-1}$ ) from 
${r}_1,\dots,{r}_m$ (resp. ${r}_{m+1},\dots,{r}_{2m}$).
{\it If, therefore, a sequence of  all  $2(m!)^2$ points
$({r}_1/p,\dots,{r}_{m-1}/p,{r}_{m+1}/p,\dots,{r}_{2m-1}/p)
\in[0,1)^{2(m-1)}$ for every prime $p\in Spl(f)$
distributes uniformly when $p\to\infty$},
then by \eqref{eq24} we have
\begin{align*}
&\lim_{X\to\infty}F_k(f,S,X)
\\
=\hspace{1mm}&vol(\{(x_1,\dots,x_{2(m-1)})\in[0,1)^{2(m-1)}\mid
\lceil \sum_{l=1}^{m-1}x_l\rceil+\lceil \sum_{l=m}^{2(m-1)}x_l\rceil=k
\})
\\
=&\sum_{i+j=k}vol\{(x_1,\dots,x_{  m-1} )\in[0,1)^{m-1}\mid\lceil \sum_{l=1}^{m-1}x_l\rceil=i\}\times
\\
&\hspace{2cm}vol\{(x_m,\dots,x_{2(m-1)})\in[0,1)^{m-1}\mid\lceil \sum_{l=m}^{2(m-1)}x_l\rceil=j\}.
\end{align*}
The volume $vol\{(x_1,\dots,x_{  m-1} )\in[0,1)^{m-1}\mid\lceil
\sum_{l=1}^{m-1}x_l\rceil=i\}$ is given by Eulerian numbers as above.
In the case of $m=3$ for now, by
$$
vol\{(x_1,x_2)\in[0,1)^2\mid\lceil x_1+x_2\rceil=i\}=
\left\{
\begin{array}{cll}
 0&\text{ if } & i \le0, \\
 1/2&\text{ if } & i =1,2, \\
 0&\text{ if } & i \ge2,
\end{array}
\right.
$$,
we have
$$
\lim_{X\to\infty}F_k(f,S,X)=
\left\{
\begin{array}{cl}
 1/4&\text{ if }  k =2,4, \\
 1/2&\text{ if }  k=3, \\
 0&\text{ otherwise. }  
\end{array}
\right.
$$
This elucidates $Pr(f,S)=[0,1,2,1,0]/4$ at $\#S=5$ in the cases of
Examples 2 and 3.

In the case of $\deg f = 4$, the assumption in (1) of Proposition\,\ref{prop5} and that of being decomposable are equivalent as follows. 
\begin{proposition}
Let ${M_2}={\mathbb{Q}}(\sqrt{D})$  $(D\in {\mathbb{Q}})$ be a quadratic field and $f(x)\in {\mathbb{Q}}[x]$ be a polynomial of degree $\ell+2$. Suppose that $f(x)=g(x)h(x)$ with $g(x)=x^2 +ax+b_1+b_2\sqrt{d},h(x)\in {M_2}[x]$ with  $a,b_1,b_2\in {\mathbb{Q}}$. Then,  $\ell=2$ and $f(x)$ is equal to $(x^2+ax+b_1)^2  - b_2^2\,d$, in particular, decomposable, which implies that in  $(1)$ of Proposition\,\ref{prop5}, the polynomial $f$ is decomposable if $\deg f=4$.
\end{proposition}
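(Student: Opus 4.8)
The plan is to play the rational polynomial $f$ off against the Galois action of $M_2/\mathbb{Q}$ on its factorization over $M_2$. Let $\sigma$ be the nontrivial automorphism of $M_2$ over $\mathbb{Q}$, so that $\sigma(\sqrt{d})=-\sqrt{d}$. Since $f(x)\in\mathbb{Q}[x]$ has $\sigma$-fixed coefficients, applying $\sigma$ to $f=gh$ gives $f=g^\sigma h^\sigma$ in $M_2[x]$, where $g^\sigma(x)=x^2+ax+b_1-b_2\sqrt{d}$ because $a,b_1\in\mathbb{Q}$ are fixed by $\sigma$. Hence both $g$ and $g^\sigma$ divide $f$ in $M_2[x]$. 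I may assume $b_2\neq0$: otherwise $g=x^2+ax+b_1\in\mathbb{Q}[x]$ would be a rational quadratic factor of the irreducible $f$, forcing $f=g$ of degree $2$, a case that does not arise in $(1)$ of Proposition~\ref{prop5}, where $g_m\notin\mathbb{Q}[x]$ because $\alpha$ has degree $2m$ over $\mathbb{Q}$.

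First I would show that $g$ and $g^\sigma$ are coprime in the principal ideal domain $M_2[x]$: their difference $g-g^\sigma=2b_2\sqrt{d}$ is a nonzero constant, so they share no common root and their greatest common divisor is a unit. Coprime divisors of $f$ multiply, so $gg^\sigma\mid f$ in $M_2[x]$. Now $gg^\sigma=(x^2+ax+b_1)^2-b_2^2 d$ is $\sigma$-invariant, hence a monic quartic in $\mathbb{Q}[x]$; dividing $f$ by this monic rational polynomial and invoking uniqueness of polynomial division (carried out identically in $\mathbb{Q}[x]$ and in $M_2[x]$) shows that $gg^\sigma\mid f$ already over $\mathbb{Q}$. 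Since $f$ is irreducible over $\mathbb{Q}$, its only monic divisor of positive degree is $f$ itself, and as $gg^\sigma$ has degree $4$ this forces $f=gg^\sigma$. Therefore $\deg f=4$, i.e.\ $\ell=2$, and
\[
f(x)=(x^2+ax+b_1)^2-b_2^2 d=G(h_0(x)),\qquad h_0(x)=x^2+ax+b_1,\ \ G(y)=y^2-b_2^2 d,
\]
so $f$ is decomposable. Specialising to $(1)$ of Proposition~\ref{prop5} with $\deg f=4$ (so $m=2$ and $g_m=g$ has rational $x$-coefficient $a=a_{2m-1}/2$) yields the final assertion.

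The hard part will be the passage from $M_2[x]$ back to $\mathbb{Q}[x]$: one must rule out that $g$ and $g^\sigma$ coincide or share a factor, which is handled by the nonzero constant difference $2b_2\sqrt{d}$ together with the reduction to $b_2\neq0$, and then verify that the a~priori $M_2$-coefficient quotient $f/(gg^\sigma)$ is genuinely rational, which follows from the uniqueness of division by a monic rational divisor. Once $f=gg^\sigma$ is established, recognising the right-hand side as the composition $G(h_0(x))$ with $1<\deg h_0<\deg f$ is immediate.
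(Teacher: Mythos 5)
Your proof is correct, but it takes a genuinely different route from the paper's. The paper argues by direct computation: writing $h(x)=x^\ell+h_1(x)+\sqrt{d}\,h_2(x)$ with $h_1,h_2\in\mathbb{Q}[x]$ and expanding $f=gh$, it notes that rationality of $f$ forces the $\sqrt{d}$-component $b_2(x^\ell+h_1(x))+(x^2+ax+b_1)h_2(x)$ to vanish identically, whence $h(x)=-b_2^{-1}\,(x^2+ax+b_1-b_2\sqrt{d})\,h_2(x)$ and $f(x)=-b_2^{-1}\bigl((x^2+ax+b_1)^2-b_2^2 d\bigr)h_2(x)$ is exhibited as a product of rational polynomials; irreducibility and monicity then force $h_2$ to be constant and $f=(x^2+ax+b_1)^2-b_2^2d$. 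You reach the same key divisibility $gg^\sigma\mid f$ abstractly instead: apply the nontrivial automorphism $\sigma$ to the factorization $f=gh$, use coprimality of $g$ and $g^\sigma$ in the principal ideal domain $M_2[x]$ (their difference is the nonzero constant $2b_2\sqrt{d}$), and descend to divisibility in $\mathbb{Q}[x]$ via uniqueness of division by the monic rational polynomial $gg^\sigma$. Both arguments share the same endgame (irreducibility forces $f=gg^\sigma$) and both need $b_2\neq0$: the paper asserts it follows from irreducibility, while you justify discarding it by reducing to the setting of Proposition~\ref{prop5}. What your version buys is conceptual economy and generality --- the conjugate-divisor-plus-coprimality argument works verbatim for a factor of any degree over any quadratic (indeed any Galois) extension, with no coefficient computation. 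What the paper's version buys is self-containedness (nothing beyond expanding a product and comparing $\mathbb{Q}$-components of coefficients) and extra information: it explicitly identifies the cofactor, showing that $h$ itself factors through the conjugate $g^\sigma$.
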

\begin{proof}
We note that the irreducibility of a polynomial $f$ implies $b_2\not=0$.
Write $h(x)= x^\ell+h_1(x) +\sqrt{d}\,h_2(x)$ $(h_1,h_2\in {\mathbb{Q}}[x])$;
then, we have 
\begin{align*}
 f(x)&=(x^2 +ax+b_1+b_2\sqrt{d})( x^\ell+h_1(x)
+\sqrt{d}\,h_2(x)   )
\\
&=(x^2+ax+b_1)(x^\ell+h_1(x)) +b_2dh_2(x)
\\
&+\sqrt{d}[ b_2(x^\ell+h_1(x)) +(x^2+ax+b_1)h_2(x)  ]\in {\mathbb{Q}}[x].
\end{align*}
Thus, we have $ b_2(x^\ell+h_1(x)) +(x^2+ax+b_1)h_2(x)  =0 $, and hence,
$x^\ell+h_1(x)=-b_2^{-1} (x^2+ax+b_1)h_2(x) $,
which implies $h(x) = -b_2^{-1} (x^2+ax+b_1-b_2\sqrt{d}  )h_2(x)$.
Thus, we have $f(x)= -b_2^{-1} ((x^2+ax+b_1)^2  - b_2^2\,d)h_2(x)$.
Since $f(x)$ is irreducible and monic, we have  $f(x)= (x^2+ax+b_1)^2  -
b_2^2\,d$.
\end{proof}

\end{document}